\theoremstyle{break}
\newtheorem{lemma}{Lemma}
\newtheorem{proposition}[lemma]{Proposition}
\newtheorem{theorem}[lemma]{Theorem}
\newtheorem{conjecture}[lemma]{Conjecture}
\newtheorem{remark}[lemma]{Remark}
\newtheorem{definition}[lemma]{Definition}
\newtheorem{example}[lemma]{Example}
\newcommand \FF {{\mathbb F}}
\newcommand \AAA {{\mathbb A}}
\newcommand \ZZ {{\mathbb Z}}
\newcommand \cE {{\mathcal E}}
\newcommand \cV {{\mathcal V}}
\newcommand{\q}{/\!\!/}
\newcommand{\Mat}{\mathrm{Mat}}
\begin{document}

\begin{center}
\huge\textsf{On $c_2$ invariants of 4-regular Feynman graphs}\\
\medskip
\normalsize \textsc{ Dmitry Doryn}\\
dmitry@ibs.re.kr

\end{center}
\begin{abstract}
The obstruction for application of effective techniques like denominator reduction for the computation of the $c_2$ invariant of Feynman graphs in general is the absence of a 3-valent vertex for the initial steps. In this paper such a formula for a 4-valent vertex is derived. The formula helps to compute the $c_2$ invariant of new graphs, for instance, 4-regular graphs with small loop number.  
\end{abstract}
\section{Introduction}

It is known that the evaluation of the Feynman periods in $\phi^4$ theory leads to interesting values, usually combinations of multiple zeta values, \cite{BrdKr}, \cite{Sch1}. There is so far no good understanding of these numbers, and they are hard to compute, even numerically.
The $c_2$ invariant can be regarded as a discrete analogue of the period. It is defined as the coefficient of the $q^2$ of the $q$-expansion of the number of $\FF_q$-rational points of the graph hypersurface - the poles of the differential form that is staying in the integral presentation of the period. This invariant respects many relations between the periods.

Starting with a (Feynman) graph $G$ the Feynman period is given by an integral of a differential form with double poles along a graph hypersurface $\cV(\Psi_G)$ given by the graph polynomial 
\begin{equation}\label{e1}
\Psi_G:=\sum_{T} \prod_{e\notin T} \alpha_e\;\in \ZZ[\alpha_1,\ldots,\alpha_{N_G}],
\end{equation}
where $T$ runs over all spanning trees of $G$, $\alpha_i$ are formal variables associated to edges and $N_G$ is the number of edges. For any prime power $q$ a number \#$\cV(\Psi_G)(\FF_q)$ of $\FF_q$-rational points of $\cV(\Psi_G)$ is divisible by $q^2$ for any $G$ with at least 3 vertices. Then define
\begin{equation}\label{e2}
c_2(G)_q:=\cV(\Psi_G)(\FF_q)/ q^2 \mod q.
\end{equation}    
The $c_2$ invariant depends on $q$, but for simple (like denominator reducible) graphs it is just a constant. It can be also a constant outside primes of bad reduction \cite{D2},\cite{Sch2}, or even have a modular nature \cite{BrSch}. The naive way to compute $c_2(G)_q$ for a prime power $q$ is just brute force counting of all the rational points over $\FF_q$ and then take the coefficient of $q^2$. This can be done only for small $q$ and small number of edges and does not compute the whole $c_2$. The much better idea is to try to eliminate the variables step by step and compute only $c_2$ itself, it can be done for many small and for several infinite series of graphs (like zigzag graphs $ZZ_h$) and this procedure is called denominator reduction. Even if the graph is not denominator reducible, it is possible to eliminate a big part of the variables first, decreasing the degree, and then try to compute the rest by other techniques, for example, analysing the underlying geometry, see K3-example in \cite{BrSch}.   

The construction of the Feynman period in $\phi^4$ involves the following operation: one takes a 4-regular graph $\Gamma$ and deletes one of the vertices together with the 4 incident edges (for getting rid of the symmetries), then for the resulting graph $G$ one defines the Feynman period (in parametric space) using the graph polynomial (\ref{e1}). Latter we write $\widehat{G}=\Gamma$ and call it the completion of $G$. Thus a Feynman graph in $\phi^4$ theory has 4 3-valent vertices while the others are 4-valent. In the denominator reduction the first 3 steps (elimination of 3 variables) are special and each next step is just the generic step of denominator reduction.
The ability to apply denominator reduction crucially depends on the existence of a 3-valent vertex for the initial steps, as well as on some factorizations of appearing polynomials that cannot hold in general but do hold for small graphs.

For a graph $G$ with $h_G:=N_G-|V_G|+1$ loops and $N_G$ edges, define $\delta_G:=2h_G-N_G$. The graphs with $\delta_G=0$ (like for example graphs in $\phi^4$) always have a 3-valent vertex, so the denominator reduction can be applied for small graphs. For denominator reducible graphs the reduction gives us the chain of congruence $\mod q$ going down to trivial case, so that $c_2(G) = -1$ for these graphs. We also know that the graphs with $\delta_G<0$ always have $c_2(G)=0$.
 
In the literature there is no one example of graphs with $\delta_G\geq 1$ such that the $c_2$ invariant is known. If $G$ has  $\delta_G\geq 2$, then $G$ could have no 3-valent vertex, thus that denominator reduction technique cannot be applied. This happens for 4-regular graphs. Even with a 3-valent formula,  the analogue of generic step of denominator reduction for a graph with $\delta_G>0$ will give up to 4 terms (not 1) on each step, if the expected factorization occur. 

In this article we derive a formula for the $c_2$ invariant in the case of a 4-valent formula (see Theorem \ref{Theorem3}) as a result of an organized elimination of 4 variables. Based on this we can apply the reduction similar to denominator reduction, that we call semilinear reduction, and compute the $c_2(G)_q$ for some small cases. We find $c_2$ for the completions $\widehat{ZZ}_3,\ldots,\widehat{ZZ}_8$ of graphs of the zigzag series. The other 4-regular graphs are already non-reducible in the sense that not all the appearing polynomials are factorizable, even for first graph $\widehat{P_{6,2}}$ (in notation from \cite{Sch1}).  This is not surprising since $\widehat{G}$ has $h_{\widehat{G}}=h_G+3$ and the graphs in $\phi^4$ also stop to be denominator reducible at 8-9 loops.  Nevertheless, our 4-valent formula and semilinear reduction can be used for elimination of big part of the variables in order to find some nice and understandable geometry behind, like it was done in \cite{BrSch}.

There is an other deeper reason to study the $c_2$ invariant of 4-regular graphs, namely the relation to the completion conjecture (see \cite{BrSch}).
\begin{conjecture}
Let $G_1$ and $G_2$ be two Feynman graphs in $\phi^4$ such that  $\widehat{G_1}=\widehat{G_2}$ (this means they come from the same 4-regular graphs by deletion of two different vertices). Then  $c_2(G_1)=c_2(G_2)$. 
\end{conjecture}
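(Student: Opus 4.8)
The plan is to prove that the two invariants agree by reducing each of them, via the four--valent elimination of Theorem~\ref{Theorem3}, but starting at the \emph{opposite} deleted vertex, and then matching the two outputs. Write $\Gamma=\widehat{G_1}=\widehat{G_2}$, let $v_1,v_2\in V_\Gamma$ be the two vertices whose deletion produces $G_1$ and $G_2$, and set $H:=\Gamma\setminus\{v_1,v_2\}$. Then $G_1\setminus v_2=G_2\setminus v_1=H$, so both graphs are built from the same core $H$ by re-attaching the four edges of one deleted vertex. When $v_1$ and $v_2$ are non-adjacent in $\Gamma$, the vertex $v_2$ is still $4$-valent in $G_1$ and $v_1$ is still $4$-valent in $G_2$; hence Theorem~\ref{Theorem3} applies verbatim and lets me eliminate the four edge variables at $v_2$ in the computation of $c_2(G_1)$, and symmetrically the four at $v_1$ in the computation of $c_2(G_2)$. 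After these eliminations both invariants are expressed through the same polynomials attached to $H$ --- spanning--forest/Dodgson polynomials of $H$ recording how the four edges of each deleted vertex meet the core.

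The heart of the argument is then to show these two one--sided reductions coincide. First I would assemble a single ``two--vertex'' master expression $M(\Gamma;v_1,v_2)$, built from the $H$--polynomials together with the incidence data of \emph{both} deleted vertices, that is manifestly symmetric under $v_1\leftrightarrow v_2$; the goal is to prove that the reduction of $c_2(G_1)$ (eliminating at $v_2$) and that of $c_2(G_2)$ (eliminating at $v_1$) each equal $M(\Gamma;v_1,v_2)$. This is the finite--field shadow of the known period statement: Schnetz's proof that the Feynman period does not depend on the vertex deleted from a fixed completion rests on the conformal ($\mathrm{SL}_2$--type) symmetry of the completed projective integral, which permutes the vertices of $\Gamma$; the analogue here is a symmetry of the relevant point counts of $\cV(\Psi)$ modulo $q^3$ under exchanging the two deleted vertices.

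I expect the term--matching to be the main obstacle. Theorem~\ref{Theorem3} does not return a single monomial relation but, as already noted in the introduction, up to four terms per step, and these terms encode the interaction between the edges of $v_1$ and those of $v_2$; proving that their full symmetric combination is invariant under swapping the two vertices is exactly the nontrivial content. The natural tools are the Dodgson (contraction--deletion and Pl\"ucker--type) identities among minors of $\Psi_\Gamma$, together with the matroid--duality/Cremona symmetry $\alpha_e\mapsto\alpha_e^{-1}$ underlying completion, used to transport the $v_2$--centred expression onto the $v_1$--centred one. A further, structural difficulty is that Theorem~\ref{Theorem3} relies on factorizations of the intermediate polynomials that fail for general $\Gamma$ (the introduction records this already at $\widehat{P_{6,2}}$); a proof valid beyond the reducible range must therefore be made factorization--free, replacing the explicit reduction by a direct Chevalley--Warning count organized so that the $q^2$--coefficients of the two varieties are equal by construction, even when the varieties themselves are not isomorphic.

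Finally, two cases lie outside the clean setup and would be handled separately: the adjacent case, where deleting $v_1$ lowers the valence of $v_2$ below four (and vice versa), which I would treat by first performing the appropriate lower--valence denominator--reduction steps before invoking Theorem~\ref{Theorem3}; and the range $\delta_G>0$ where the residual geometry may be genuinely nonlinear (K3 or modular, as in \cite{BrSch}), in which case one cannot hope to finish by elimination alone and must instead identify a common geometric model for the two sides and compare point counts there.
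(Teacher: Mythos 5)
This statement is a conjecture, and the paper does not prove it; on the contrary, the introduction explicitly records that the approach you outline --- comparing $c_2$ of the two deletions through the completion by eliminating the four edge variables at a vertex --- is exactly the approach worked out in this article, and that the resulting 4-valent formula ``is symmetric and does not help for proving the conjecture.'' Your text is a research programme, not a proof: every genuinely difficult step is deferred. The ``two-vertex master expression'' $M(\Gamma;v_1,v_2)$ is never constructed, and the claim that the two one-sided reductions ``are expressed through the same polynomials attached to $H$'' is not established and is not true in any directly usable sense. Applying Theorem~\ref{Theorem3} to $G_1=\Gamma\setminus v_1$ at the vertex $v_2$ (non-adjacent case) gives
\begin{equation*}
c_2(G_1)_q\equiv -[\Psi^{13,24}_{G_1},\Psi^{14,23}_{G_1}]_q \mod q
\end{equation*}
(the four terms $c_2(G_1\backslash e_i)$ vanish because those graphs have $\delta<0$), and symmetrically for $G_2$ with its own pair of Dodgson polynomials. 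Although both pairs are polynomials in the edge variables of $H=\Gamma\setminus\{v_1,v_2\}$, they are minors of two different matrices $M_{G_1}$ and $M_{G_2}$, encoding the attachment of the four edges at $v_2$ (resp.\ $v_1$) to four marked vertices of $H$. No identity in the paper, and none in your proposal, relates the point counts of these two distinct pairs; asserting that they agree modulo $q$ is a restatement of the conjecture, not progress toward it. Your own phrase ``I expect the term-matching to be the main obstacle'' concedes that the entire content of the statement is missing.

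Two further points. First, the appeal to ``conformal/$\mathrm{SL}_2$ symmetry of the completed projective integral'' is an analogy with the period, not an argument: no finite-field analogue of that symmetry is known, and transporting it to point counts of $\cV(\Psi)$ modulo $q^3$ is precisely what one would have to invent. Second, your fallback of making the argument ``factorization-free'' via a direct Chevalley--Warning count ``organized so that the $q^2$-coefficients are equal by construction'' describes the desired conclusion rather than a mechanism; Chevalley--Warning only yields vanishing modulo $q$ under degree hypotheses that fail here (the relevant pairs have $\delta=0$), which is why the paper must track all terms explicitly. The correct assessment is that the statement remains open in the paper, and the proposal does not close it.
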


This conjecture is the most interesting statement in the theory of the $c_2$ invariant. It remains unproved.  One of the ideas how to prove it was the following. The fact that $c_2$ invariants are the same for graphs with same completions could have something to do with the completion itself. Since $c_2$ is defined for the completion --- 4-regular graph --- then there could possibly be a way to compare $c_2(G)$ and $c_2(\widehat{G})$. The statement will then follow from non-symmetry of this relation on the vertex we remove.  

In this article this approach is worked out. The formula we get for $c_2$ is symmetric and does not help for proving the conjecture. It produces also no other similar relations for the sub-quotient graphs of 4-regular graphs. 

Finally, for the $c_2$ invariants for 4-regular $\widehat{ZZ_{h}}$ we obtain (Proposition \ref{prop26})
\begin{equation}
c_2(G)_q\equiv -h(h+2) \mod q
\end{equation}
for $h\leq 8$ and we conjecture that this holds for all $h$ (see Conjecture \ref{conj25}).
\medskip

\noindent \textbf{Acknowledgments}\\
The author is thankful to Center for Geometry and Physics (Institute for Basic Science) for the nice working atmosphere. 
The work was supported by the research grant IBS-R003-S1.

\section{Preliminary results}

Let $G$ be a graph with the set of vertices $V=V(G)$ and the set of edges $E=E(G)$. Let $N=N_G:=E(G)$ be the number of edges and let $h_G:=N_G-|V|+1$ be the \textit{loop number}. We numerate edges $e_1,\ldots, e_N$ and associate to each edge $e_i$ the variable (Schwinger parameter) $\alpha_i$. For a given graph, we can define the following polynomial 
\begin{equation}
\Psi_G:=\sum_{T} \prod_{e\notin T} \alpha_e\;\in \ZZ[\alpha_1,\ldots,\alpha_{N_G}],
\end{equation}
where $T$ runs over all spanning trees of $G$, the subgraphs that are trees and contain all the vertices. This polynomial is called the \textit{graph polynomial} (or the first Symanzik polynomial) of $G$. 

The graph polynomial is homogeneous of degree $h_G$ and is linear with respect to each variable. Let $e_k$ be one of the edges. Then $\Psi_G$ can be written in the following way called \textit{contraction-deletion formula}:
\begin{equation}\label{e5}
\Psi_G=\Psi^k_G\alpha_1 +\Psi_{G,k}
\end{equation}
with $\Psi^k_G$ and $\Psi_{G,k}$ independent of $\alpha_k$. It turns out that these two polynomials are again graph polynomials, namely $\Psi^k_G=\Psi_{G\backslash k}$ and $\Psi_{G,k}=\Psi_{G\q k}$, where $G\backslash k$  (respectively $G\q k$) is the graph obtained from $G$ by deletion (respectively contraction) of the edge $e_k$. 
Equivalently, $\Psi_G$ can be defined as the determinant of the matrix.  
\begin{equation}\label{e3}
\Psi_G=\det M_G, \quad M_G =
\left(
\begin{array}{c|c}
\Delta(\alpha)& \cE_G\\
\hline
-\cE_G^{T^{\mathstrut}} & 0 
\end{array}
\right) \in \Mat_{N+n,N+n}(\ZZ[\{\alpha_i\}_{i\in I_N}]),
\end{equation}
where $\Delta(\alpha)$ is the diagonal matrix with entries $\alpha_1,\ldots,\alpha_N$, and $\cE_G\in \Mat_{N,n}(\ZZ)$ is the incidence matrix after deletion of the last column, $N=N_G$, $n=n_G$ (see \cite{Br}, Section 2.2). The equivalence of the two definitions of $\Psi_G$ is the content of the Matrix Tree Theorem.

We should enlarge the set of polynomials we work with.

Let $I, J, K\subset E(G)$ be  sets of edges with $|I|=|J|$ and $K\cap (I\cup J)=\emptyset$.  Out of the matrix $M_G$, one defines the \textit{Dodson polynomials} $\Psi^{I,J}_{G,K}$ by 
\begin{equation}
\Psi^{I,J}_{G,K}:= \det M_G(I;J)_K \ ,
\end{equation} where $M_G(I;J)_K$ obtained from $M_G$ by removing rows indexed by $I$ and columns indexed by $J$, and by putting $\alpha_t=0$ for all $t\in K$. Such a polynomial $\Psi^{I,J}_{G,K}$ is of degree $h_G-|I|$ and depends on $N_G-|I|-|K|$ variables. We usually write $\Psi^I_{K}$ for $\Psi^{I,I}_{G,K}$ and this is consistent with (\ref{e5}). 

Similar to graph polynomials, the Dodgson polynomials satisfy many identities, see \cite{Br}. We give here those that will be used in the sequel. 

For any Dodgson polynomial $\Psi^{I,J}_{G,K}$ and any edge $e_a\in E\setminus I\cup J\cup K$, the \emph{contraction-deletion formula} holds: 
\begin{equation}\label{d8}
\Psi^{I,J}_{G,K} = \pm\Psi^{Ia,Jb}_{G,K}\alpha_a \pm \Psi^{I,J}_{G,Ka}
\end{equation}  
where the signs depend on the choices made for the matrix $M_G$. This formula agrees with $(\ref{e5})$ in the case of the graph polynomial ($I=J$).

Let $I,J\subset E(G)$ be subsets of edges with $I\cap J=\emptyset$ and let $e_a,e_b,e_x\notin I\cup J$. Then 
\begin{equation}\label{d9}
\Psi^{Ix,Jx}\Psi^{Ia,Jb}-\Psi^{Ix,Jb}\Psi^{Ia,Jx}= \gamma\Psi^{I,J}\Psi^{Iax,Jbx}
\end{equation} 
where $\gamma\in\{1, -1\}$ can be understood combinatorically. This is called the \textit{first Dodgson identity}.

Consider again  sets of (pairwise non-intersecting) edges $I$, $J$, $U$ with $|I|=|J|$, and assume $|U|=h_G$. Define by $\cE_G(U)$ the matrix obtained form the incidence matrix $\cE_G$ by removing all the rows corresponding to $U$. We know that $\det(\cE_G(U))=0\; \text{or}\;\pm 1$, and non-zero exactly when $U$ forms a spanning tree. One has the following equality for the Dodgson polynomials
\begin{equation}\label{d10}
\Psi^{I,J}_G= \sum_{U\subset G\backslash I\cup J} \prod_{u\notin U} \alpha_u \det(\cE_G(U\cup I))\det(\cE_G(U\cup J)),
\end{equation}
where $U$ ranges over all subgraphs of $G\backslash (I\cup J)$ which have the property that $U\cup I$ and $U\cup J$ are both spanning tries of $G$, see Proposition 8, \cite{BrY}.

\begin{definition}
Let $P=P_1\cup P_2\cup \ldots \cup P_k$ be a set partition of a subset of vertices of $G$. Define 
\begin{equation}
\Phi^P_G:=\sum_F \prod_{e\notin F} \alpha_e ,
\end{equation} 
where the sum runs over spanning forests $F=T_1\cup \ldots  T_k$, and each tree $T_i$ of $F$ contains the vertixes in $P_i$ and no other vertices of $P$, i.e. $V(T_i)\supseteq P_i$ and $V(T_i)\cup P_j=\emptyset$ for $i\neq j$. These polynomials $\Phi^P_G$ are called the spanning forest polynomials, see \cite{BrY} for examples and more explanation.
\end{definition}
There is an interpretation of the Dodgson polynomials in terms of the spanning forest polynomials, cf. Proposition 12, \cite{BrY}.
\begin{lemma}
For two sets of edges $I$ and $J$ with $|I|=|J|$ and $I\cap J=\emptyset$, one has 
\begin{equation}\label{d12}
\Psi^{I,J}_G=\sum_k \gamma_k\Phi^{P_k}_G.
\end{equation}
Here the sum runs over all partitions of $V(I\cup J)$ and $\gamma_k$ are the coefficients in $\{-1,0,1\}$ that can be also controlled (see Section 2, loc.cit.).
\end{lemma}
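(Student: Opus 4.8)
The plan is to start from the all-minors matrix--tree expansion (\ref{d10}) and reorganize its terms according to the partition of $V(I\cup J)$ that each contributing edge set induces. Write $m=|I|=|J|$. Formula (\ref{d10}) presents $\Psi^{I,J}_G$ as a signed sum $\sum_U \epsilon(U)\prod_{u\notin U}\alpha_u$, where $U$ ranges over the edge subsets of $G\setminus(I\cup J)$ for which both $U\cup I$ and $U\cup J$ are spanning trees, and $\epsilon(U):=\det(\cE_G(U\cup I))\det(\cE_G(U\cup J))\in\{-1,0,1\}$. The first observation is that every such $U$ is itself a spanning forest of $G$: since $U\cup I$ is a spanning tree and $|I|=m$, the set $U$ is acyclic with exactly $m+1$ connected components. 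Moreover each component must contain a vertex incident to $I$ and a vertex incident to $J$, for otherwise adjoining $I$ (resp.\ $J$) could not reconnect it; hence the components cut out a partition $P(U)$ of $V(I\cup J)$ into exactly $m+1$ nonempty blocks.

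Next I would collect the terms of (\ref{d10}) according to the induced partition. Fix a partition $P$ of $V(I\cup J)$. The decisive point is that, for a forest $U$ with $P(U)=P$, the conditions ``$U\cup I$ is a spanning tree'' and ``$U\cup J$ is a spanning tree'' depend only on $P$ and not on the internal structure of $U$: an edge of $I$ joins the two blocks of $P$ containing its endpoints, so $U\cup I$ is a tree exactly when the edges of $I$ form a spanning tree of the graph obtained by contracting each block of $P$ to a point (and likewise for $J$). Consequently either every forest inducing $P$ satisfies both spanning-tree conditions or none does. In the former case the forests $U$ with $P(U)=P$ are precisely the spanning forests enumerated by $\Phi^P_G$ --- such forests cannot use an edge of $I\cup J$, since each such edge runs between two distinct blocks and would merge them --- and their monomials $\prod_{u\notin U}\alpha_u$ coincide with those of $\Phi^P_G$ once the edges of $I\cup J$ are accounted for by the convention defining the spanning forest polynomial; in the latter case $P$ contributes nothing and we set $\gamma_P=0$.

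It then remains to show that $\epsilon(U)$ is constant over all $U$ with a fixed induced partition $P$; this constant value $\gamma_P\in\{-1,0,1\}$ is exactly the coefficient sought. I expect this sign invariance to be the main obstacle. The clean way to see it is to analyse $\det(\cE_G(U\cup I))$ through the block structure of $P$: choosing the columns (vertices) of the incidence matrix adapted to the forest $U$, the determinant factors, up to an overall sign common to $U\cup I$ and $U\cup J$, into a part recording the internal spanning-tree structure of $U$ inside each block and a part recording how the external edges $I$ (resp.\ $J$) link the blocks. Because $U$ is shared, the internal part is identical in the two determinants and cancels in the product $\epsilon(U)$, leaving only a sign determined by how $I$ and $J$ connect the blocks, which depends on $P$ alone. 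To make this rigorous I would use an edge-exchange argument: any two forests inducing the same partition are joined by a sequence of single-edge swaps internal to one component, and one checks that such a swap multiplies $\det(\cE_G(U\cup I))$ and $\det(\cE_G(U\cup J))$ by the same factor, so $\epsilon(U)$ is unchanged; since the forests realizing a given $P$ form a connected exchange graph, $\epsilon(U)$ is indeed constant.

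Finally, assembling the three steps yields $\Psi^{I,J}_G=\sum_P\gamma_P\Phi^P_G$ with $\gamma_P\in\{-1,0,1\}$, summed over all partitions $P$ of $V(I\cup J)$, where those with the wrong number of blocks or failing the connectivity conditions simply contribute $\gamma_P=0$; tracking the signs through the incidence-matrix computation above gives the promised combinatorial control of the coefficients.
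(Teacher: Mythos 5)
The paper gives no proof of this lemma --- it is quoted from Proposition 12 of \cite{BrY} --- so the comparison is really against that source, and your reconstruction follows essentially the same route: expand $\Psi^{I,J}_G$ by the all-minors formula (\ref{d10}), note that each contributing $U$ is a spanning forest with $|I|+1$ components, each meeting both $V(I)$ and $V(J)$, group terms by the induced partition $P$ of $V(I\cup J)$, and show that the sign $\epsilon(U)=\det(\cE_G(U\cup I))\det(\cE_G(U\cup J))$ depends only on $P$. Two points need repair. First, the edge-exchange argument you offer as the ``rigorous'' version of the sign-invariance step does not work as stated: two forests can induce the same partition of $V(I\cup J)$ while having different components as subsets of $V(G)$, because a vertex outside $V(I\cup J)$ may sit in different components in the two forests; such pairs are not connected by swaps internal to a single component, so your exchange graph is not obviously connected. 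Fortunately your first argument is the correct one and already suffices: performing column operations on the reduced incidence matrix adapted to $U$, the determinant $\det(\cE_G(U\cup I))$ factors into the tree determinants of $U$ restricted to each component times the determinant of the incidence matrix of $I$ in the graph obtained by contracting each component; the former factor is shared by the $I$- and $J$-determinants and cancels in $\epsilon(U)$, while the latter depends only on which block of $P$ each endpoint of each edge of $I$ (resp.\ $J$) lies in. Second, with the paper's literal definition of $\Phi^P_G$ (product over all edges of $G$ not in $F$), the monomials of $\Phi^P_G$ carry the extra factor $\prod_{e\in I\cup J}\alpha_e$ compared with those of (\ref{d10}); the clean statement is $\Psi^{I,J}_G=\sum_P\gamma_P\Phi^P_{G\setminus(I\cup J)}$, which is how \cite{BrY} phrases it. Neither issue affects the overall strategy, but both should be corrected in a final version.
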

We need the \emph{Jacobi's determinant formula}
\begin{lemma}\label{lem3}
Let $M=(a_{ij})$ be an invertible $n\times n$ matrix and let $adj(M)=(A_{i,j})$ be the adjugate matrix of $M$ (the transpose of the matrix of cofactors). Then for any $k$, $1\leq k\leq n$, 
\begin{equation}\label{e27}
\det (A_{ij})_{k\leq i,j\leq n} = \det(M)^{n-k-1}\det (a_{ij})_{1\leq i,j\leq k} \ .
\end{equation}
\end{lemma}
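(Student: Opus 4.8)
The plan is to prove this purely from the defining relation of the adjugate, $M\,\mathrm{adj}(M)=\det(M)\,I_n$, by a block-matrix manipulation; no combinatorics or expansion into minors is needed. Write $N:=\mathrm{adj}(M)=(A_{ij})$ and partition both $M$ and $N$ into conformal blocks
\begin{equation}
M=\begin{pmatrix} A & B\\ C & D\end{pmatrix},\qquad N=\begin{pmatrix} P & Q\\ R & S\end{pmatrix},
\end{equation}
where $A$ is the leading $k\times k$ block $\bigl(a_{ij}\bigr)_{1\le i,j\le k}$ of $M$ (its determinant is the right-hand side of the claim) and $S$ is the trailing $(n-k)\times(n-k)$ block of $N$ (its determinant is the left-hand side). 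The whole task is thus to relate $\det S$ to $\det A$.

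First I would read off, from $MN=\det(M)\,I_n$, the two block identities coming from the $(1,2)$ and $(2,2)$ blocks of the product, namely
\begin{equation}
AQ+BS=0,\qquad CQ+DS=\det(M)\,I_{n-k}.
\end{equation}
These are exactly the relations needed to triangularize a well-chosen product. Introduce the auxiliary matrix
\begin{equation}
L:=\begin{pmatrix} I_{k} & Q\\ 0 & S\end{pmatrix},
\end{equation}
and compute $ML$ block by block. Using the two identities above, the top-right block becomes $AQ+BS=0$ and the bottom-right block becomes $CQ+DS=\det(M)\,I_{n-k}$, so that
\begin{equation}
ML=\begin{pmatrix} A & AQ+BS\\ C & CQ+DS\end{pmatrix}=\begin{pmatrix} A & 0\\ C & \det(M)\,I_{n-k}\end{pmatrix}.
\end{equation}

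Finally I would take determinants of both sides. Since $L$ is block upper-triangular and $ML$ is block lower-triangular, Lemma~\ref{lem3}'s identity drops out at once:
\begin{equation}
\det(M)\,\det(S)=\det(A)\,\det(M)^{\,n-k},
\end{equation}
and dividing by the nonzero scalar $\det(M)$ (here the invertibility hypothesis on $M$ is used) gives $\det(S)=\det(M)^{\,n-k-1}\det(A)$, which is the asserted formula. I do not expect a genuine obstacle: the result is classical and, once the product $ML$ is written down, the computation is essentially forced. The only points requiring care are the bookkeeping of block sizes and the exponent, and — in the more general version of the identity for arbitrary, non-contiguous index sets $I,J$ — the tracking of a permutation sign. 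For the leading and trailing blocks used in the statement, however, the two triangular determinants above produce no sign, so the coefficient is exactly $\det(M)^{\,n-k-1}$ as claimed.
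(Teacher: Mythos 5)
Your proof is correct. The paper itself gives no proof of this lemma --- it is quoted as the classical Jacobi determinant formula and used as a black box --- so there is no in-paper argument to compare against; your block-triangularization of $ML$ using the relations $AQ+BS=0$ and $CQ+DS=\det(M)\,I_{n-k}$ extracted from $M\,\mathrm{adj}(M)=\det(M)I_n$ is the standard and complete derivation, and the division by $\det(M)$ is licensed by the invertibility hypothesis. One small point worth flagging: as printed, the left-hand side of \eqref{e27} ranges over $k\leq i,j\leq n$, which is a block of size $n-k+1$, and for that block Jacobi would give $\det(M)^{n-k}\det(a_{ij})_{1\leq i,j\leq k-1}$; the exponent $n-k-1$ and the complementary $k\times k$ block appearing on the right-hand side are only consistent with the trailing block of size $n-k$ (indices $k+1\leq i,j\leq n$). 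You silently adopt the latter reading by taking $S$ to be $(n-k)\times(n-k)$, which is the only dimensionally coherent interpretation, so your argument proves the identity the paper actually intends; it would be worth noting explicitly that the stated index range contains an off-by-one.
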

The easy consequence is the following (see \cite{Br}, Lemma 31)
\begin{lemma}
Consider  two sets of edges $I$  and $J$ with $|I|=|J|$ and let $S=\{s_1,\ldots,s_r\}\subset E_G\setminus I\cup J$ be some other subset of edges. If $\Psi^{IS,JS}$ vanishes as a polynomial of $\alpha_i$s, then 
\begin{equation}\label{e28}
\Psi^{Is_t,Js_t}=\sum_{k\neq t}\pm \Psi^{Is_t,Js_k}
\end{equation}
with the signs depending on the order of rows in $M_G$. 
\end{lemma}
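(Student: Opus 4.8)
The plan is to convert the statement into a single determinantal vanishing and then read the linear relation off from it, with Jacobi's formula (Lemma~\ref{lem3}) as the only nontrivial input. First I would set $C:=M_G(I;J)$, the square matrix obtained from $M_G$ by deleting the rows indexed by $I$ and the columns indexed by $J$, so that $\det C=\Psi^{I,J}_G$. For $s_i,s_j\in S$ the Dodgson polynomial $\Psi^{Is_i,Js_j}_G$ is, up to an explicit sign determined by the positions of the deleted rows and columns, the first minor of $C$ obtained by striking row $s_i$ and column $s_j$; hence it is an entry of $\mathrm{adj}(C)$. Thus the $r\times r$ array $N:=\bigl(\Psi^{Is_i,Js_j}_G\bigr)_{1\le i,j\le r}$ coincides, after normalizing its rows and columns by signs, with the submatrix of $\mathrm{adj}(C)$ on the index set $S$.

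Next I would apply Jacobi's determinant formula to this submatrix. By Lemma~\ref{lem3}, with the block indexed by $S$ in the role of the lower-right corner, the $r\times r$ minor of $\mathrm{adj}(C)$ supported on $S$ equals $\pm(\det C)^{\,r-1}\det\bigl(C[S^c;S^c]\bigr)$, where $C[S^c;S^c]$ is the complementary minor, i.e. $M_G$ with rows $I\cup S$ and columns $J\cup S$ removed, so $\det\bigl(C[S^c;S^c]\bigr)=\pm\,\Psi^{IS,JS}_G$. By hypothesis $\Psi^{IS,JS}_G=0$, so this minor vanishes and $\det N=0$: the Dodgson matrix $N$ is singular. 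Moreover its $(r-1)\times(r-1)$ minors on equal index sets are, again by Lemma~\ref{lem3}, the polynomials $\pm(\Psi^{I,J}_G)^{\,r-2}\,\Psi^{I(S\setminus s),J(S\setminus s)}_G$, which do not vanish in general, so $\corank N=1$ generically and there is a single (up to scale) linear dependence among the columns of $N$. Spelling this dependence out for the column indexed by $s_t$ produces a relation of exactly the shape $\Psi^{Is_t,Js_t}_G=\sum_{k\neq t}\lambda_k\,\Psi^{Is_t,Js_k}_G$, i.e. $(\ref{e28})$ up to determining the $\lambda_k$.

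The main obstacle is precisely the identification of the coefficients, since Jacobi only forces $\det N=0$, and singularity by itself yields a dependence with rational-function coefficients, whereas the statement asserts $\lambda_k\in\{1,-1\}$. To control them I would exhibit the dependence explicitly rather than abstractly: the vanishing complementary minor gives a nonzero left-null vector $\nu$ of $C[S^c;S^c]$, supported on the rows of $C$ outside $S$, and pairing $\nu$ (extended by zero) with the full columns of $C$ yields a row vector $\rho=\nu^{\mathrm T}C$ supported only on the $S$-columns, with entries $\rho_{s_k}=\sum_i\nu_i\,C_{i,s_k}$. Since $\rho$ lies in the row span of $C$ with row $s_t$ deleted, appending it creates a linearly dependent square matrix; expanding its vanishing determinant along $\rho$ gives $\sum_k\pm\rho_{s_k}\,\Psi^{Is_t,Js_k}_G=0$, which is $(\ref{e28})$ with $\lambda_k$ proportional to $\rho_{s_k}$, and the same $\nu$ serves all $t$ simultaneously. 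The genuinely delicate point is that the $\rho_{s_k}$ are all equal up to sign: here one uses that the only off-diagonal entries of $M_G$ are the $0,\pm1$ incidence entries, so each $\rho_{s_k}$ is an integer combination of the vertex-components of $\nu$ associated to the two endpoints of $s_k$, and the combinatorial control of signs for the Dodgson and spanning-forest polynomials (as in $(\ref{d12})$) forces the value $\pm1$. Tracking these signs, which is what ``depending on the order of the rows in $M_G$'' refers to, is the only real bookkeeping in the argument.
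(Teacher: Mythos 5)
The paper does not actually prove this lemma: it is quoted from \cite{Br} (Lemma 31), so there is no internal proof to compare against, and your attempt has to be judged on its own. The strategy is the right kind of strategy --- produce a vector in the row span of $M_G(I;J)$ minus the row $s_t$ that is supported on the columns $S$, append it, and Laplace-expand --- but there is a genuine gap at exactly the point you flag. The Jacobi step only yields $\det N=0$, and the null-vector construction only yields a relation $\sum_k \pm\,\rho_{s_k}\Psi^{Is_t,Js_k}=0$ with coefficients $\rho_{s_k}=\sum_{i\notin S}\nu_i C_{i,s_k}$ lying in the function field $\QQ(\alpha)$: a priori they are rational functions, some may vanish (in particular $\rho_{s_t}$, in which case you cannot solve for $\Psi^{Is_t,Js_t}$), and nothing you wrote forces them to be pairwise equal up to sign, let alone equal to $\pm 1$. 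The appeal to ``combinatorial control of signs as in (\ref{d12})'' is not an argument: the assertion that all $\rho_{s_k}$ coincide up to sign is precisely the content of the lemma beyond mere singularity of $N$, and it is the part that must exploit the hypothesis that $\Psi^{IS,JS}$ vanishes \emph{identically as a polynomial} together with the special shape of $M_G$. For a general square matrix $C$ with $\det C[S^c;S^c]=0$ the conclusion (\ref{e28}) is simply false (take a $3\times 3$ matrix with $a_{33}=0$ and $S=\{1,2\}$: the claim would force $a_{31}=\pm a_{32}$), so no purely linear-algebraic completion of your argument can exist.

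What closes the gap in every case this paper actually uses is to exhibit $\nu$ explicitly. When $S$ is the full set of edges meeting a vertex $v$ (Example \ref{main_ex} and the $4$-valent situation), the matrix $M_G(I\cup S;J\cup S)$ has identically zero row and column at the vertex index $v$, so one may take $\nu=e_v$; then $\rho_{s_k}=\pm\,\cE_{s_k,v}=\pm 1$ because every $s_k$ is incident to $v$, and your Laplace expansion gives (\ref{e28}) on the nose --- this is the same mechanism as the Pl\"ucker-type identity invoked later in the paper. For the lemma in the stated generality (arbitrary $I,J,S$ with $\Psi^{IS,JS}=0$) your argument as written does not suffice, and one has to supply the actual argument of Lemma 31 in \cite{Br}.
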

The main theorem of the article considers the situation of a 4-valent vertex in a graph and a good way to eliminate first variables.  Here we should discuss the case of a 3-valent vertex that is "classical" but the technique is important for what comes latter. 
\begin{example} \label{main_ex}
Let $G$ be a graph with a 3-valent vertex incident to the edges $e_1$,$e_2$ and $e_3$. Deletion of these 3 edges leads to disconnectedness of the vertex, thus $\Psi^{123}=0$. So we are in the settings of the previous lemma. Other equality to mention is $\Psi^{12}_3=\Psi^{23}_1=\Psi^{13}_2$. This holds since the deletion of any 2 of the 3 edges and contraction of the third one leads to the same subquotient graph. The Jacobi determinant formula  (\ref{e27}) implies 
\begin{equation}
\det \left( \begin{array}{ccc}
\Psi^1 & \Psi^{1,2} & \Psi^{1,3} \\
\Psi^{2,1} & \Psi^2 & \Psi^{2,3} \\
\Psi^{3,1} & \Psi^{3,2} & \Psi^3 \end{array} \right) = 0.
\end{equation}
In the spirit of (\ref{e28}), the first row of the matrix gives $\Psi^1=\Psi^{1,2}-\Psi^{1,3}$. 
We rewrite this in terms of the 3 variables :
\begin{equation}
\Psi^{12}_3\alpha_2+\Psi^{13}_2\alpha_3+\Psi^{1}_{23}= \Psi^{13,23}\alpha_3+\Psi^{1,2}_3+\Psi^{12,23}\alpha_2-\Psi^{1,3}_2.
\end{equation}
Define 
\begin{equation}
f_0:=\Psi^{ij}_{k},\;\: f_k:=\Psi^{i,j}_k,\;\: f_{123}:=\Psi_{123}\;\;\text{ for } \{i,j,k\}=\{1,2,3\} \ .
\end{equation}
Equation above (and similar after permutation of the 3 edges) implies 
\begin{equation}
f_0=\Psi^{ij,k}=\Psi^{ik,jk}\quad \text{and}\quad \Psi^i_{jk} = f_j+f_k 
\end{equation}
for all $\{i,j,k\}=\{1,2,3\}$.
If follows that the graph polynomial for a graph with a 3-valent vertex has the form 
\begin{multline}\label{e30}
\Psi_G=f_0(\alpha_1\alpha_2+\alpha_2\alpha_3+
\alpha_1\alpha_3)+(f_1+f_2)\alpha_3+(f_1+f_3)\alpha_2\\
\quad+ (f_2+f_3)\alpha_1+f_{123},\quad
\end{multline}
together with the identity
\begin{equation}\label{e31}
f_0f_{123}=f_1f_2+f_2f_3+f_1f_3.
\end{equation}
\end{example}

\section{Point-counting functions}
For a prime power $q$ and for an affine variety $Y$ defined over $\ZZ$, we define by $[Y]_q:=\# \bar{Y}(\FF_q)$ the number of $\FF_q$-rational points of $Y$ after extension of scalars to $\FF_q$. More o less, this means that $[f]_q$ is a number of solutions of $f=0$ in $\FF_q^n$ after taking the coefficients$\mod q$ for an affine hypersurface given by $f\in \ZZ[x_1,\ldots,x_n]$. Here and later, we use the shortcut $[f,\ldots,f_n]_q$ for $[\cV(f_1,\ldots,f_n)]_q$. We think of $[\cdot]_q$ as a function of $q$. Sometimes (but not in general), this function is a polynomial of $q$. 

The very basic situation one always meet while computing of the the point-counting function is the case of a linear polynomial or system of two linear polynomials.
\begin{lemma}\label{lemA17}
Let $f^1,f_1,g^1,g_1,h\in\ZZ[\alpha_2,\ldots,\alpha_n]$ be polynomials. Then, considering the varieties on the right hand side of the coming formulas to be in $\AAA^{n-1}$ and the varieties on the left to be in $\AAA^n$, for every $q$,
\begin{enumerate}
\item for $f=f^1\alpha_1+f_1$, one has 
\begin{equation}\label{lin11}
[f,h]_q=[h]_q - [f^1,h]_q + q[f^1,f_1,h]_q \ ,
\end{equation}
and, in particular, 
\begin{equation}\label{lin1}
[f]_q=q^{n-1} - [f^1]_q + q[f^1,f_1]_q \ .
\end{equation}
\item for $f=f^1\alpha_1+f_1$ and $g=g^1\alpha_1+g_1$, one has
\begin{equation}\label{lin21}
[f,g,h]_q=q[f^1,f_1,g^1,g_1,h]_q + [f^1g_1-g^1f_1,h]_q - [f^1,g^1,h]_q
\end{equation}
and
\begin{equation}\label{lin2}
[f,g]_q=q [f^1,f_1,g^1,g_1]_q+ [f^1g_1-g^1f_1]_q - [f^1,g^1]_q \ .
\end{equation}
\end{enumerate}
\end{lemma}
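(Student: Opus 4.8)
The plan is to prove every identity by fibering the affine space $\AAA^n$ over $\AAA^{n-1}$ along the projection that forgets $\alpha_1$, and then counting, for each point of the base, how many values of $\alpha_1$ satisfy the linear conditions. Since $f^1,f_1,g^1,g_1,h$ involve none of $\alpha_1$, over a fixed base point $p=(\alpha_2,\ldots,\alpha_n)\in\FF_q^{n-1}$ the equation $f=f^1\alpha_1+f_1=0$ has exactly one solution in $\alpha_1$ when $f^1(p)\neq 0$, has $q$ solutions when $f^1(p)=f_1(p)=0$, and has none when $f^1(p)=0\neq f_1(p)$. This trichotomy for a single linear equation is the engine behind all four formulas.

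For part (1) I would restrict attention to the base points $p$ with $h(p)=0$, of which there are $[h]_q$, and split them according to the trichotomy. The points with $f^1(p)\neq 0$ number $[h]_q-[f^1,h]_q$ and each contributes one point to $\cV(f,h)$; the points with $f^1(p)=f_1(p)=0$ number $[f^1,f_1,h]_q$ and each contributes $q$ points; the remaining points contribute nothing. Summing gives (\ref{lin11}) directly, and (\ref{lin1}) is the special case obtained by taking $h$ to be identically zero, so that $[h]_q=q^{n-1}$.

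For part (2) the same fibering is used, but now a base point $p$ (with $h(p)=0$) must satisfy the two linear conditions $f=0$ and $g=0$ simultaneously. The new ingredient is the resultant-type polynomial $R:=f^1g_1-g^1f_1$: when $f^1(p)\neq 0$ the unique root $\alpha_1=-f_1/f^1$ of $f$ satisfies $g$ precisely when $R(p)=0$, since $g(-f_1/f^1)=R/f^1$. I would therefore organize the base points by the pair $(f^1(p),g^1(p))$. Where both leading coefficients vanish a solution exists only if $f_1(p)=g_1(p)=0$, and then all $q$ values of $\alpha_1$ work, producing the term $q[f^1,f_1,g^1,g_1,h]_q$. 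Where at least one leading coefficient is nonzero the fibre contributes at most one point, and the condition for it to contribute is exactly $R(p)=0$; after an inclusion--exclusion the one-point fibres total $[R,h]_q-[f^1,g^1,h]_q$. Assembling the two contributions yields (\ref{lin21}), and (\ref{lin2}) again follows by setting $h\equiv 0$.

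The routine part is the trichotomy for a single linear equation; the one place that needs care is the merging of the degenerate cases in part (2). Specifically, the identity $R|_{\{f^1=0\}}=-g^1f_1$ must be used to reconcile the generic count ``$R=0$ among $f^1\neq 0$'' with the subcase $f^1=0$, $g^1\neq 0$ (where a solution exists iff $f_1=0$, equivalently $R=0$), and to excise the doubly-degenerate locus $\{f^1=g^1=0\}$, which is already counted in the $q$-weighted term. This bookkeeping is the only subtle step; everything else is direct point-counting over the base.
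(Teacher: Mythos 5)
Your proof is correct. The paper itself gives no argument here, deferring to Lemma 3.1 of \cite{D4}; your fibration of $\AAA^n$ over $\AAA^{n-1}$ with the trichotomy on the leading coefficient (and the careful reconciliation of the case $f^1=0\neq g^1$ via $R|_{\{f^1=0\}}=-g^1f_1$, together with the inclusion $\cV(f^1,g^1,h)\subset\cV(R,h)$) is exactly the standard point-counting argument that underlies the cited result, so nothing further is needed.
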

\begin{proof}
See, for example,  Lemma 3.1 in \cite{D4}. 
\end{proof}
\begin{definition}
For 2 polynomials linear in one of the variables, $f=f^k\alpha_k+f_k$ and $g=g^k\alpha_k+g_k$ we use the following notation for there resultant 
\begin{equation}
[f,g]_k:=\pm (f^kg_k-f_kg^k).
\end{equation}
\end{definition}
\begin{lemma}\label{lem10}
Let $G$ be a graph with at least 3 vertices and $q$ is a fixed prime power. Then for the number of rational points on the graph hypersurvace $\cV(\Psi)$ over $F_q$ the following holds:  
\begin{equation}
[\Psi_G]_q\equiv 0 \mod q^2
\end{equation}
and 
\begin{equation}
[\Psi_G^1,\Psi_{G,1}]_q\equiv 0 \mod q
\end{equation}
for any edge $e_1$ and any $q$.
\begin{proof}
See, for example, Theorem 2.9 in \cite{Sch2} and Proposition-Definition 18 in \cite{BrSch}.
\end{proof}
\end{lemma}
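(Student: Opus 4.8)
The plan is to prove the two congruences together by induction on the number of edges $N_G$, treating the second (a statement modulo $q$) as the engine that drives the first (modulo $q^2$). At a given edge count I would first establish the second congruence and then deduce the first from it. The deduction is immediate from the one-variable reduction (\ref{lin1}): if $G$ is a forest then $\Psi_G\in\{0,1\}$ and both claims are trivial, so I may pick $e_1$ on a cycle, whence
\[
[\Psi_G]_q = q^{\,N_G-1} - [\Psi^1_G]_q + q\,[\Psi^1_G,\Psi_{G,1}]_q .
\]
Since $G$ is connected with $|V_G|\geq 3$ and has a cycle we get $N_G\geq 3$, so $q^{\,N_G-1}\equiv 0 \bmod q^2$; the last term is $\equiv 0 \bmod q^2$ by the second congruence; and $\Psi^1_G=\Psi_{G\setminus 1}$ is the graph polynomial of a graph on the same $\geq 3$ vertices with one fewer edge, so $[\Psi^1_G]_q\equiv 0 \bmod q^2$ by induction. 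Hence $[\Psi_G]_q\equiv 0 \bmod q^2$.

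The heart is therefore $[\Psi^1_G,\Psi_{G,1}]_q\equiv 0 \bmod q$, which I would attack by eliminating a second edge $e_2$. Writing $\Psi^1_G=\Psi^{12}\alpha_2+\Psi^1_2$ and $\Psi_{G,1}=\Psi^2_1\alpha_2+\Psi_{12}$, with $\Psi^{12}:=\Psi^{12,12}_G$, $\Psi^1_2:=\Psi^{1,1}_{G,2}$, $\Psi^2_1:=\Psi^{2,2}_{G,1}$, $\Psi_{12}:=\Psi_{G,12}$, the two-variable formula (\ref{lin2}) gives, modulo $q$,
\[
[\Psi^1_G,\Psi_{G,1}]_q \equiv \big[\Psi^{12}\Psi_{12}-\Psi^1_2\Psi^2_1\big]_q - \big[\Psi^{12},\Psi^2_1\big]_q \pmod q .
\]
The resultant term is controlled by the Jacobi identity (\ref{e27}) (equivalently the first Dodgson identity (\ref{d9})): applied to the $2\times 2$ minor of the adjugate indexed by $\{1,2\}$, specialised at $\alpha_1=\alpha_2=0$, and combined with the antisymmetry $\Psi^{2,1}_G=\pm\Psi^{1,2}_G$, it yields the \emph{perfect-square} identity $\Psi^{12}\Psi_{12}-\Psi^1_2\Psi^2_1=\pm(\Psi^{1,2}_G)^2$. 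Hence $\cV(\Psi^{12}\Psi_{12}-\Psi^1_2\Psi^2_1)=\cV(\Psi^{1,2}_G)$ and the resultant term equals $[\Psi^{1,2}_G]_q$.

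It remains to see that both residual terms vanish modulo $q$. For $[\Psi^{12},\Psi^2_1]_q$ I would note $\Psi^{12}=\Psi_{(G\setminus 2)\setminus 1}$ and $\Psi^2_1=\Psi_{(G\setminus 2)/1}$, so that $[\Psi^{12},\Psi^2_1]_q=[\Psi^1_{G\setminus 2},\Psi_{G\setminus 2,1}]_q$ is exactly the second congruence for $G\setminus 2$ and the edge $e_1$, which holds by the inductive hypothesis (one fewer edge). For $[\Psi^{1,2}_G]_q$ I would use a degree count of Chevalley--Warning type: as a polynomial in the $N_G-2$ variables $\alpha_3,\dots,\alpha_{N_G}$, the Dodgson polynomial $\Psi^{1,2}_G$ is multilinear and homogeneous of degree $h_G-1$, and $h_G-1<N_G-2$ is equivalent to $|V_G|\geq 3$. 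Concretely this follows by iterating (\ref{lin1}): stripping off one variable at a time leaves a factor $q^{(\text{remaining variables})-1}$ at each stage, and the number of remaining variables never drops below $|V_G|-1\geq 2$ before the degree reaches $0$, so every such factor is $\equiv 0\bmod q$ and the degree-$0$ remainder (a constant) contributes $0\bmod q$ as well. This is the single place where "at least three vertices" is genuinely used, and it is a count tied to the ambient dimension rather than a generic degree bound, which is why the conclusion persists for graphs with $\delta_G\geq 0$.

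The main obstacle I anticipate is twofold. First, pinning down the perfect-square identity with the correct normalisation, so that the zero locus of the resultant is \emph{equal} to $\cV(\Psi^{1,2}_G)$ and not merely contained in it; this forces one to track the signs $\gamma$ appearing in (\ref{d9})/(\ref{e27}) and the antisymmetry of the off-diagonal blocks of $M_G$. Second, organising the bookkeeping of the induction cleanly: one must check that admissible edges $e_1,e_2$ exist and that the minors $G\setminus 1$, $G\setminus 2$, $G/1$ stay within the scope of the statement (the $\geq 3$-vertex condition is preserved, but connectedness may fail), so that bridges, parallel edges, and the disconnected cases $\Psi=0$ are handled uniformly and the few-edge base cases are genuinely trivial.
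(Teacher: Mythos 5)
The paper does not prove this lemma at all---it simply cites Theorem~2.9 of \cite{Sch2} and Proposition--Definition~18 of \cite{BrSch}---so there is nothing internal to compare against; what you have written is essentially a reconstruction of the standard argument from those references, and it is correct in substance. The skeleton is right: induct on $N_G$, derive the $q^2$-divisibility of $[\Psi_G]_q$ from (\ref{lin1}) once $q\mid[\Psi^1,\Psi_1]_q$ is known, and establish the latter by a second elimination via (\ref{lin2}), the Dodgson perfect-square identity $\Psi^1_2\Psi^2_1-\Psi^{12}\Psi_{12}=(\Psi^{1,2})^2$, and the Chevalley--Warning bound $\deg\Psi^{1,2}=h_G-1<N_G-2\iff|V_G|\geq 3$. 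Your decision to handle $[\Psi^{12},\Psi^2_1]_q$ by recognizing it as the same statement for $G\backslash 2$ and inducting is exactly the right move: a two-polynomial Chevalley--Warning count would need $(h_G-2)+(h_G-1)<N_G-2$, which fails precisely in the regime $\delta_G>0$ that this paper cares about, so the induction is what makes the lemma hold in the stated generality. Two caveats. First, the statement implicitly assumes $G$ connected: for a disconnected graph on $3$ vertices with one edge, $\Psi_G=0$ and $[\Psi_G]_q=q$, so your claim that the forest cases are ``trivial'' is not literally true and the base cases need $N_G\geq 2$ (resp.\ $N_G\geq 3$) spelled out. Second, when a minor $G\backslash 2$ is disconnected or $e_1$ becomes a bridge, one of the two polynomials in $[\Psi^1_{G\backslash 2},\Psi_{G\backslash 2,1}]_q$ vanishes identically and the inductive hypothesis no longer applies verbatim; these degenerate cases must be disposed of separately (by a direct $q$-power or Chevalley--Warning count), as you anticipate in your closing paragraph but do not carry out.
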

This allows us to introduce the main object of our study.
\begin{definition}
Let $G$ be a graph with at least 3 vertices. Then 
\begin{equation}
c_2(G)_q:=[\Psi_G]_q/ q^2 \mod q.
\end{equation}
\end{definition}
We will intensively use the following vanishing argument proved by Katz.
\begin{theorem}[Chevalley-Warning vanishing]\label{CW_thm}
Let $f_1,\ldots, f_k\subset \ZZ[x_1,\ldots,x_n]$ be polynomials and assume that the degrees $d_i:=\deg f_i$ satisfy $\sum_1^k d_i < n$. Then, for the number of $\FF_q$-rational points of the variety $\cV(f_1,\ldots, f_n)$ given by the intersection of the hyperplanes $\cV(f_i)$ in $\AAA^n$, the following congruence holds
\begin{equation}
  [f_1,\ldots,f_k]_q\equiv 0 \mod q.
\end{equation}  
\end{theorem}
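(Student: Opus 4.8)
The plan is to reduce the count of common zeros to a power-sum computation over $\FF_q$, first obtaining the congruence modulo the characteristic $p$, and then to account for the extra work needed to pass from $p$ to the full prime power $q$. Write $N := [f_1,\ldots,f_k]_q$ for the number of common zeros in $\AAA^n(\FF_q)$. For the congruence modulo $p$ I would use the classical characteristic-function trick: for $x \in \FF_q^n$ the product $P(x) := \prod_{i=1}^k \bigl(1 - f_i(x)^{q-1}\bigr)$ equals $1$ when $x$ is a common zero and $0$ otherwise, since $f_i(x)^{q-1} = 1$ whenever $f_i(x) \neq 0$. Hence $\sum_{x \in \FF_q^n} P(x)$ equals the image of $N$ in $\FF_q$, so it computes $N \bmod p$.

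The degree hypothesis enters next. Since $\deg P = (q-1)\sum_i d_i$ and $\sum_i d_i < n$, we have $\deg P < (q-1)n$, so in every monomial $x_1^{a_1}\cdots x_n^{a_n}$ occurring in $P$ at least one exponent satisfies $a_j < q-1$. The power-sum identity that $\sum_{t \in \FF_q} t^a$ equals $-1$ when $a$ is a positive multiple of $q-1$ and $0$ otherwise (the case $a=0$ giving $q \equiv 0$) then kills the $x_j$-sum of that monomial; as $\sum_x$ factors through one-variable sums, every monomial contributes $0$ and $N \equiv 0 \pmod p$. To upgrade this to $N \equiv 0 \pmod q$ I would pass to additive characters: fixing a nontrivial $\psi$, the fact that $\frac{1}{q}\sum_{y_i \in \FF_q}\psi(y_i f_i(x))$ equals $1$ when $f_i(x)=0$ and $0$ otherwise gives $q^k N = \sum_{x \in \FF_q^n}\sum_{y \in \FF_q^k} \psi\bigl(\sum_i y_i f_i(x)\bigr)$, and one reads off the divisibility of $N$ by $q$ from a lower bound on the $p$-adic valuation of this exponential sum, via Gauss sums and Stickelberger's theorem; under $\sum_i d_i < n$ the resulting Ax--Katz estimate gives $\mathrm{ord}_q(N) \geq 1$.

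The genuinely hard step is this last one: the power-sum argument by itself yields only divisibility by $p$, and closing the gap to $q$ is precisely the content of the theorem of Ax and Katz. In practice I would invoke that theorem directly as the cited input --- this is the role of the attribution to Katz --- rather than reproduce the Gauss-sum valuation estimate, which is the one nonelementary ingredient. The remaining pieces, namely the characteristic-function identity and the one-variable power-sum lemma, are entirely routine.
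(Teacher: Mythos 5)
Your proposal is correct and, at the decisive step, identical to what the paper does: the paper offers no proof at all, merely attributing the statement to Katz, and you likewise delegate the passage from divisibility by $p$ to divisibility by the prime power $q$ to the Ax--Katz theorem. The elementary characteristic-function argument you supply for the congruence modulo $p$ is accurate (and a useful sanity check), but, as you rightly emphasize, it does not by itself yield the stated congruence modulo $q$, so citing Ax--Katz remains the load-bearing ingredient in both treatments.
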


For a polynomial $f\in \ZZ[x_1,\ldots,x_N]$ of degree $d$, define $\delta(f):=2d-N$. In the case of a graph polynomial $f=\Psi_G$ the equality $\delta(f)=0$ corresponds to $G$ being log-divergent. If $G$ is a 4-regular graph, then $\delta(\Psi_G)=2$. The positivity of $\delta$ is the obstruction to the vanishing of the big part of the summands  in the reduction procedures of iterative elimination of the variables like denominator reduction.   What we will try to do instead is to keep track of all the summands.\medskip

From the graph polynomial we can always eliminate the first 2 variables.
\begin{lemma}\label{lem13}
Let $G$ be graph with the 2 edges $e_1$ and $e_2$. Then
\begin{equation}\label{e33}
[\Psi_G]_q=q^{N_G-1}-[\Psi^1]_q+ q^2 [\Psi^{12},\Psi^1_2,\Psi^2_1,\Psi_{12}]_q+ q [\Psi^{1,2}]_q - q [\Psi^{12},\Psi^2_1]_q.
\end{equation}
\end{lemma}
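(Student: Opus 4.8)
The plan is to eliminate $\alpha_1$ and then $\alpha_2$ by two successive applications of Lemma \ref{lemA17}, exploiting that $\Psi_G$ is linear in every variable. First I would write, via the contraction-deletion formula (\ref{e5}), $\Psi_G=\Psi^1\alpha_1+\Psi_1$ with $\Psi^1=\Psi^1_G$ and $\Psi_1=\Psi_{G,1}$ both independent of $\alpha_1$, and apply the single-polynomial formula (\ref{lin1}) to obtain
\[
[\Psi_G]_q=q^{N_G-1}-[\Psi^1]_q+q\,[\Psi^1,\Psi_1]_q .
\]
This already accounts for the first two terms of (\ref{e33}), so the task reduces to expanding the joint count $[\Psi^1,\Psi_1]_q$.

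Next I would apply the contraction-deletion formula (\ref{d8}) in the edge $e_2$ to the two polynomials $\Psi^1$ and $\Psi_1$, which are independent of $\alpha_1$ and linear in $\alpha_2$:
\[
\Psi^1=\Psi^{12}\alpha_2+\Psi^1_2,\qquad \Psi_1=\Psi^2_1\alpha_2+\Psi_{12},
\]
up to the signs permitted in (\ref{d8}). Treating $\alpha_2$ as the distinguished variable and feeding these into the two-polynomial formula (\ref{lin2}) gives
\[
[\Psi^1,\Psi_1]_q=q\,[\Psi^{12},\Psi^1_2,\Psi^2_1,\Psi_{12}]_q+[\Psi^{12}\Psi_{12}-\Psi^1_2\Psi^2_1]_q-[\Psi^{12},\Psi^2_1]_q .
\]
Substituting back reproduces every term of the claimed identity except that the resultant contributes $q\,[\Psi^{12}\Psi_{12}-\Psi^1_2\Psi^2_1]_q$ rather than $q\,[\Psi^{1,2}]_q$.

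The only real obstacle, then, is to identify the bilinear resultant $\Psi^{12}\Psi_{12}-\Psi^1_2\Psi^2_1$ with the off-diagonal Dodgson polynomial $\Psi^{1,2}$ at the level of point counts. Viewing $\Psi_G=\Psi^{12}\alpha_1\alpha_2+\Psi^1_2\alpha_1+\Psi^2_1\alpha_2+\Psi_{12}$ as a bilinear form in $(\alpha_1,\alpha_2)$ and noting $\Psi^1=\Psi^{12}\alpha_2+\Psi^1_2$, $\Psi^2=\Psi^{12}\alpha_1+\Psi^2_1$, I would invoke the Desnanot--Jacobi (condensation) identity — the two-index case of Jacobi's determinant formula (Lemma \ref{lem3}) applied to $M_G$ — in the form $\Psi_G\,\Psi^{12}=\Psi^1\Psi^2-\Psi^{1,2}\Psi^{2,1}$. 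Comparing the coefficients free of $\alpha_1,\alpha_2$ on both sides collapses this to
\[
\Psi^{12}\Psi_{12}-\Psi^1_2\Psi^2_1=-\,\Psi^{1,2}\Psi^{2,1}.
\]
Since the two minors of $M_G$ satisfy $\Psi^{2,1}=\pm\Psi^{1,2}$ (they differ only by a transposition sign coming from the block structure of $M_G$), the right-hand side equals $\mp(\Psi^{1,2})^2$, whose zero locus over $\FF_q$ is exactly $\cV(\Psi^{1,2})$; hence $[\Psi^{12}\Psi_{12}-\Psi^1_2\Psi^2_1]_q=[\Psi^{1,2}]_q$, and the signs introduced by (\ref{d8}) are irrelevant because only the vanishing locus enters the count. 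Inserting this into the chain above yields precisely (\ref{e33}). The remaining bookkeeping — the ambient affine dimension descending from $N_G$ to $N_G-1$ to $N_G-2$ across the two eliminations — is handled automatically by the conventions of Lemma \ref{lemA17}, so I expect no difficulty there.
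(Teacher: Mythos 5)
Your proposal is correct and follows essentially the same route as the paper: eliminate $\alpha_1$ via (\ref{lin1}), eliminate $\alpha_2$ from the pair $(\Psi^1,\Psi_1)$ via (\ref{lin2}), and identify the resultant $\Psi^{12}\Psi_{12}-\Psi^1_2\Psi^2_1$ with $\pm(\Psi^{1,2})^2$ so that its point count equals $[\Psi^{1,2}]_q$. The only cosmetic difference is that you rederive this last identity from Jacobi's determinant formula (Desnanot--Jacobi condensation) by comparing coefficients, whereas the paper cites the first Dodgson identity (\ref{d9}) directly --- these are the same fact.
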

\begin{proof}
We use (\ref{lin1}) for $f:=\Psi$ and (\ref{lin2}) for the pair $(\Psi^1,\Psi_1)$ 
\begin{multline}
[\Psi]_q=q^{N_G-1}-[\Psi^{1}]_q+q [\Psi^{1},\Psi_1]_q= q^{N_G-1}-[\Psi^{1}]_q +\\ q^2 [\Psi^{12},\Psi^1_2,\Psi^2_1,\Psi_{12}]_q+ q [\Psi^{12}\Psi_{12}-\Psi^1_2\Psi^2_1]_q - q [\Psi^{12},\Psi^2_1]_q.
\end{multline}
and then the first Dodgson identity (\ref{d9})
\begin{equation}
	\Psi^1_2\Psi^2_1-\Psi^{12}\Psi_{12} = (\Psi^{1,2})^2.
\end{equation}
The statement follows.
\end{proof}

\begin{lemma}\label{lemma11}
Let $G$ be a graph with $|V|\geq 4$ and with a 3-valent vertex, say, incident to the edges $e_1$, $e_2$ and $e_3$.  Then 
\begin{equation}
c_2(G)\equiv [f_0,f_3]_q \mod q.
\end{equation}
\end{lemma}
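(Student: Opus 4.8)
The plan is to eliminate the three variables $\alpha_1,\alpha_2,\alpha_3$ meeting the $3$-valent vertex one at a time, using the explicit shape (\ref{e30}) of $\Psi_G$ together with the quadratic relation (\ref{e31}), and to track the point count $[\Psi_G]_q$ only modulo $q^3$ (recall that $c_2(G)_q$ is the coefficient of $q^2$, and $[\Psi_G]_q\equiv 0 \bmod q^2$ by Lemma \ref{lem10}). Writing $\Psi_G=\Psi^1\alpha_1+\Psi_1$ with $\Psi^1=f_0(\alpha_2+\alpha_3)+f_2+f_3$ and $\Psi_1=f_0\alpha_2\alpha_3+(f_1+f_2)\alpha_3+(f_1+f_3)\alpha_2+f_{123}$, I would first apply (\ref{lin1}) to remove $\alpha_1$, obtaining $[\Psi_G]_q=q^{N_G-1}-[\Psi^1]_q+q[\Psi^1,\Psi_1]_q$, and then remove $\alpha_2$ from the two remaining counts $[\Psi^1]_q$ and $[\Psi^1,\Psi_1]_q$ by (\ref{lin1}) and (\ref{lin2}) respectively. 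Note that $f_0,f_1,f_2,f_3,f_{123}$ involve only $\alpha_4,\dots,\alpha_{N_G}$.

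The decisive step is the resultant appearing when $\alpha_2$ is eliminated from the pair $(\Psi^1,\Psi_1)$. Expanding the bracket that enters (\ref{lin2}) and substituting the identity (\ref{e31}) collapses it to the perfect square $-(f_0\alpha_3+f_3)^2$, in exact analogy with the way the first Dodgson identity (\ref{d9}) produces a square in the proof of Lemma \ref{lem13}. Since $[(f_0\alpha_3+f_3)^2]_q=[f_0\alpha_3+f_3]_q$, eliminating the last variable $\alpha_3$ by (\ref{lin1}) produces the term $q^2[f_0,f_3]_q$, which is the source of the asserted formula.

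It then remains to show that every other contribution is invisible in the coefficient of $q^2$. The pure powers $q^{N_G-1}$ and $q^{N_G-2}$ either cancel or vanish modulo $q^3$. The genuine correction terms are $[f_0]_q$, the two-fold resultants $[f_0,f_2+f_3]_q$ and $[f_0,f_1+f_3]_q$, and one four-fold term from the square count. I would control them by three facts. First, $f_0=\Psi^{12}_3$ is the graph polynomial of $G$ with the $3$-valent vertex deleted, so Chevalley--Warning (Theorem \ref{CW_thm}) gives $[f_0]_q\equiv 0 \bmod q$ from $\deg f_0=h_G-2<N_G-3$, and Lemma \ref{lem10} even gives $[f_0]_q\equiv 0 \bmod q^2$. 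Second, the relation (\ref{e31}) forces a collapse of loci: on $\{f_0=0,\ f_2+f_3=0\}$ it yields $-f_3^2=0$, hence $f_2=f_3=0$, so $[f_0,f_2+f_3]_q=[f_0,f_2,f_3]_q$; recognizing the pair $(f_0,f_2+f_3)=(\Psi^2_H,\Psi_{H,2})$ for the minor $H=G\backslash 1\q 3$ makes Lemma \ref{lem10} applicable, so $[f_0,f_2+f_3]_q\equiv 0\bmod q$, and symmetrically for $[f_0,f_1+f_3]_q$. The hard part is exactly this final bookkeeping, namely matching the $q^2$-coefficient coming from $[f_0]_q$ against the one coming from $[f_0,f_2+f_3]_q+[f_0,f_1+f_3]_q$; it amounts to the exact identity $[f_0]_q=q\big([f_0,f_2,f_3]_q+[f_0,f_1,f_3]_q\big)$.

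A clean way to verify this and indeed to obtain the whole statement in one stroke is the fibrewise alternative: count the quadric (\ref{e30}) in $(\alpha_1,\alpha_2,\alpha_3)$ over each value of $\alpha_4,\dots,\alpha_{N_G}$. When $f_0\neq 0$ the shift $\alpha_i\mapsto\alpha_i-f_i/f_0$ removes all linear terms (its constant term vanishes by (\ref{e31})) and leaves the cone $f_0(\alpha_1\alpha_2+\alpha_2\alpha_3+\alpha_1\alpha_3)=0$, which has exactly $q^2$ points in every characteristic; when $f_0=0$ the quadric is affine-linear and again has $q^2$ points, except on $\{f_0=f_1=f_2=f_3=0\}$ (a collapse again forced by (\ref{e31})), where it contributes $q^3$ or $0$ according to $f_{123}$. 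Summing gives the closed form $[\Psi_G]_q=q^{N_G-1}-q^2[f_0,f_1,f_2,f_3]_q+q^3[f_0,f_1,f_2,f_3,f_{123}]_q$, whence $c_2(G)_q\equiv -[f_0,f_1,f_2,f_3]_q\bmod q$. The passage to the stated $[f_0,f_3]_q$ is then the inclusion--exclusion on $\{f_0=f_3=0\}$, where (\ref{e31}) forces $f_1f_2=0$, combined with the vanishing $[f_0,f_i,f_3]_q=[f_0,f_i+f_3]_q\equiv 0\bmod q$ established above.
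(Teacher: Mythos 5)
Your proof is correct, but the engine that actually closes it --- the fibrewise count in your last paragraph --- is a genuinely different route from the paper's. The paper stays entirely inside the two-variable elimination of Lemma \ref{lem13}: it kills the four-fold term by a cone argument, extracts $q[f_0,f_3]_q$ from $[\Psi^{1,2}]_q$ using $q^2\mid[f_0]_q$, and disposes of $[\Psi^1]_q$ and $[\Psi^{12},\Psi^2_1]_q$ by recognizing them as point-counts attached to minors with a $2$-valent vertex, whose $c_2$ vanishes. You instead sum the exact number of points of the quadric (\ref{e30}) in $(\alpha_1,\alpha_2,\alpha_3)$ over each point of the base $\AAA^{N_G-3}$, obtaining the closed identity
\begin{equation*}
[\Psi_G]_q=q^{N_G-1}-q^2\,[f_0,f_1,f_2,f_3]_q+q^3\,[f_0,f_1,f_2,f_3,f_{123}]_q,
\end{equation*}
and then convert $-[f_0,f_1,f_2,f_3]_q$ into $[f_0,f_3]_q$ by inclusion--exclusion, using that (\ref{e31}) forces $f_1f_2=0$ on $\cV(f_0,f_3)$ and that $[f_0,f_i+f_3]_q=[f_0,f_i,f_3]_q\equiv 0\bmod q$ by Lemma \ref{lem10} applied to the minors $G\backslash 2\q 3$ and $G\backslash 1\q 3$ (this is where $|V_G|\geq 4$ enters). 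I checked the two delicate points: the affine cone over $\alpha_1\alpha_2+\alpha_2\alpha_3+\alpha_1\alpha_3=0$ has exactly $q^2$ points in every characteristic including $2$, and the simultaneous vanishing of the three linear coefficients together with (\ref{e31}) forces $f_1=f_2=f_3=0$ also in characteristic $2$. Your version arguably buys more: an exact formula for $[\Psi_G]_q$ rather than a congruence, and it exhibits the symmetric expression $-[f_0,f_1,f_2,f_3]_q$ for $c_2$ before specializing to the asymmetric-looking $[f_0,f_3]_q$.

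One correction to your first (elimination) sketch: the assertion that the bookkeeping ``amounts to the exact identity $[f_0]_q=q\big([f_0,f_2,f_3]_q+[f_0,f_1,f_3]_q\big)$'' is both unnecessary and false in general. If you carry the elimination out exactly, the contribution $+q[f_0]_q$ coming from $-[\Psi^1]_q$ cancels identically against the $-q[f_0]_q$ coming from $+q[\Psi^{1,2}]_q$; and even without noticing this cancellation, $[f_0]_q\equiv 0\bmod q^2$ (Lemma \ref{lem10}, since $f_0$ is the graph polynomial of $G\backslash 12\q 3$) already renders each such term invisible modulo $q^3$. No matching of $q^2$-coefficients is required, and your fibration computation confirms that the correct residual identity is simply $[f_0,f_3]_q-[f_0,f_1,f_3]_q-[f_0,f_2,f_3]_q=-[f_0,f_1,f_2,f_3]_q$.
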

\begin{proof}
We are going to use (\ref{e33}). Since $G$ has a 3-valent vertex, the graph polynomial $\Psi_G$ has the form (\ref{e30}). In these terms one computes
\begin{equation}
[\Psi^{12},\Psi^1_2,\Psi^2_1,\Psi_{12}]_q=[f_0,f_1+f_3,f_2+f_3, (f_1+f_2)\alpha_3+f_{123}]_q.
\end{equation}
Using (\ref{e31}), one derives that the vanishing $f_1+f_3=0$ on $\cV(f_0)$ implies  $f_1=f_3=0$. So the term above vanishes  mod $q$, since the defining polynomials become  independent of $\alpha_3$.

We also have 
\begin{equation}\label{e35}
[\Psi^{1,2}]_q=[f_0\alpha_3+f_3]_q=q^{N-3} - [f_0]_q + q [f_0,f_3]_q
\end{equation}
and since $f_0$ is again a graph polynomial for $G:=G\backslash 12 \q  3$, it implies $q^2| [f_0]_q$ and
\begin{equation}\label{e35.1}
[\Psi^{1,2}]_q\equiv q [f_0,f_3]_q \mod q^2.
\end{equation}
In the case of a log-divergent $G$, $c_2(G\backslash 12\q 3)=0$.
From the equation 
\begin{equation}\label{e35.5}
[\Psi^2]_q=q^{N-2}-[\Psi^{12}]_q+q [\Psi^{12},\Psi^2_1]_q
\end{equation}
and the fact that the existence of a vertex of valency $\leq 2$ implies the vanishing of the $c_2$ invariant, we derive $q^2| [\Psi^{12},\Psi^2_1]_q$ (and additionally also $q^3| [\Psi^1]_q$). Now  (\ref{e33}) yields
\begin{equation}\label{e35.6}
[\Psi_G]_q\equiv [f_0,f_3]_q q^2 \mod q^3.
\end{equation}
In other words,  for any graph $G$ with $|V|\geq 3$ and a 3-valent vertex,
\begin{equation}\label{e36}
c_2(G)\equiv [f_0,f_3]_q \mod q.
\end{equation}
\end{proof}

There is a way to effectively compute the $c_2$ invariant of a logarithmically divergent graph (i.e. $N_G=2h_G$) that is called denominator reduction. 
 
For a log-divergent graph $G$ (with $e_1$, $e_2$ and $e_3$ incident to a 3-valent vertex), the \emph{denominator reduction} algorithm with respect to some ordering of the edges $e_1,\ldots, e_n$ is a sequence  of polynomials $D_k\in\ZZ[\alpha_{k+1},\ldots,\alpha_n]$ for some $n\leq N-1$ and for all $3\leq k\leq n$ such that  $D_3:=f_0f_3$ and the following rules: 
\begin{enumerate}
\item if $D_{m-1}$ is defined and is factorizable into factors linear in $\alpha_k$
\begin{equation}
      D_{k-1}=(f^k\alpha_k+f_k)(g^k\alpha_k+g_k)
\end{equation}
then the next $D_{k}$ is defined by 
\begin{equation}
	D_k:=\pm\big( f^kg_k-f_kg^k\big)
\end{equation}
\item if $D_{k-1}$ is zero and then $D_k=0$, the algorithm stops and say that $G$ has a weight drop.
\item if $D_k$ is defined for all $k$,  $3\leq k\leq N_G-1$, then $G$ is called denominator reducible.
\end{enumerate}
One can start with $D_0:=\Psi_G$, but the first 3 (initial) steps are special and in this sense we have 2 \emph{initial} weight drops at the first and third step of denominator reduction.  Generically the graph fails to be denominator reducible already at step five: $D_5$ always exists but is not factorizable usually. Nevertheless, all small graphs and many several infinite series of graphs are denominator reducible, see \cite{BrY}. 

\section{$G$ with a 4-valent vertex}

Consider now the situation of a graph $G$ with a 4-valent vertex incident to the edges, say, $e_1,\ldots, e_4$. We are going to think of $\Psi_G$ as polynomial of the 4 corresponding variables $\alpha_1,\ldots, \alpha_4$. We can easily eliminate the first 2 variables using Lemma \ref{lem13} and get 
\begin{equation}
[\Psi_G]_q=q^{N_G-1}-[\Psi^1]_q+ q^2 [\Psi^{12},\Psi^1_2,\Psi^2_1,\Psi_{12}]_q+ q [\Psi^{1,2}]_q - q [\Psi^{12},\Psi^2_1]_q.
\end{equation}
 \bigskip
For further reduction using this formula, we firstly need to understand the summand $[\Psi^{1,2}_G]_q$.
\begin{lemma} \label{lem15}
Let $G$ be as above. Then
\begin{equation}
[\Psi^{1,2}]_q\equiv q\big(-[\Psi^{13,24},\Psi^{14,23}]_q - c_2(G\backslash 3)\\ -c_2(G\backslash 4)\big) \mod q^2.
\end{equation}
\end{lemma}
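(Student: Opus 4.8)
The plan is to regard $\Psi^{1,2}$ as a polynomial that is linear in each of the two remaining edge-variables $\alpha_3,\alpha_4$ at the $4$-valent vertex, and to peel these variables off one at a time with the linear reduction formulas of Lemma \ref{lemA17}. First I would apply (\ref{lin1}) in the variable $\alpha_3$, using $\Psi^{1,2}=\Psi^{13,23}\alpha_3+\Psi^{1,2}_3$ (so that the leading coefficient in $\alpha_3$ is $\Psi^{13,23}$ and the constant term is $\Psi^{1,2}_3$). Since $\Psi^{1,2}$ lives in $\AAA^{N_G-2}$, this gives
\begin{equation}
[\Psi^{1,2}]_q = q^{N_G-3} - [\Psi^{13,23}]_q + q\,[\Psi^{13,23},\Psi^{1,2}_3]_q .
\end{equation}
The power $q^{N_G-3}$ is $\equiv 0 \bmod q^2$, so only the last two summands matter modulo $q^2$.

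For the term $[\Psi^{13,23}]_q$ I would use $\Psi^{13,23}=\Psi^{1,2}_{G\backslash 3}$ (deleting $e_3$ removes row and column $3$). In $G\backslash 3$ the $4$-valent vertex becomes $3$-valent with incident edges $\{1,2,4\}$, so the $3$-valent computation of Example \ref{main_ex}, carried out for $G\backslash 3$ with distinguished edges $\{1,2,4\}$, applies and the analogues of (\ref{e35.1})--(\ref{e36}) yield $[\Psi^{1,2}_{G\backslash 3}]_q\equiv q\,c_2(G\backslash 3)\bmod q^2$; this produces the $-c_2(G\backslash 3)$ contribution. The resultant term $[\Psi^{13,23},\Psi^{1,2}_3]_q$ I would then evaluate modulo $q$ by a second reduction, this time in $\alpha_4$: both factors are linear in $\alpha_4$, with leading coefficients $\Psi^{134,234}$ and $\Psi^{14,24}_3$ and constant terms $\Psi^{13,23}_4$ and $\Psi^{1,2}_{34}$, so (\ref{lin2}) (discarding its $q$-divisible part) gives
\begin{equation}
[\Psi^{13,23},\Psi^{1,2}_3]_q \equiv \big[\Psi^{134,234}\Psi^{1,2}_{34} - \Psi^{14,24}_3\Psi^{13,23}_4\big]_q - [\Psi^{134,234},\Psi^{14,24}_3]_q \pmod q .
\end{equation}
For the last bracket I would invoke Example \ref{main_ex} once more, now for $G\backslash 4$, whose distinguished $3$-valent vertex carries edges $\{1,2,3\}$: there $f_0=\Psi^{13,23}_{G\backslash 4}=\Psi^{134,234}$ and $f_3=\Psi^{1,2}_{3}$ of $G\backslash 4$, which equals $\Psi^{14,24}_3$. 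Hence $[\Psi^{134,234},\Psi^{14,24}_3]_q=[f_0,f_3]_q\equiv c_2(G\backslash 4)\bmod q$, the second $c_2$ term.

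It remains to recognise the first bracket. Here I would apply the first Dodgson identity (\ref{d9}) with $I=\{1\}$, $J=\{2\}$, $x=3$, $a=b=4$, namely $\Psi^{13,23}\Psi^{14,24}-\Psi^{13,24}\Psi^{14,23}=\gamma\,\Psi^{1,2}\Psi^{134,234}$, and then specialise by setting $\alpha_3=\alpha_4=0$. Because $\Psi^{13,24},\Psi^{14,23},\Psi^{134,234}$ are independent of $\alpha_3,\alpha_4$ while $\Psi^{13,23}$ and $\Psi^{14,24}$ lose only their surviving variable, the result is a relation showing that the resultant $\Psi^{134,234}\Psi^{1,2}_{34}-\Psi^{14,24}_3\Psi^{13,23}_4$ equals, up to sign, the single product $\Psi^{13,24}\Psi^{14,23}$. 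Now $\Psi^{13,24}$ and $\Psi^{14,23}$ each have degree $h_G-2$ and depend only on the $N_G-4$ variables $\alpha_5,\dots,\alpha_{N_G}$; since $h_G-2<N_G-4$ precisely when $|V|\geq 4$, Chevalley--Warning (Theorem \ref{CW_thm}) gives $[\Psi^{13,24}]_q\equiv[\Psi^{14,23}]_q\equiv 0\bmod q$, and inclusion--exclusion for the point count of a product then yields $\big[\Psi^{13,24}\Psi^{14,23}\big]_q\equiv -[\Psi^{13,24},\Psi^{14,23}]_q\bmod q$. Assembling the three pieces produces exactly the stated congruence.

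The step I expect to be the main obstacle is the sign bookkeeping. Both the contraction--deletion formula (\ref{d8}) and the Dodgson identity (\ref{d9}) carry an undetermined $\pm$ (the combinatorial $\gamma$), and I must verify that, for a fixed ordering of the rows of $M_G$, the bilinear resultant in the first bracket collapses to the clean product $\Psi^{13,24}\Psi^{14,23}$ with no residual cross term surviving. Since every final quantity is a point count, insensitive to the overall sign of a single defining polynomial, the genuinely delicate point is not the individual signs but ensuring this collapse to a product; this is where the precise value of $\gamma$ in (\ref{d9}) must be pinned down combinatorially.
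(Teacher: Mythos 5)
Your proposal is correct and follows essentially the same route as the paper: peel off $\alpha_3$ then $\alpha_4$ with Lemma \ref{lemA17}, identify $[\Psi^{13,23}]_q$ and $[\Psi^{134,234},\Psi^{14,24}_3]_q$ as the $c_2$ contributions of $G\backslash 3$ and $G\backslash 4$ via the $3$-valent formula, collapse the resultant to $\Psi^{13,24}\Psi^{14,23}$ by the first Dodgson identity, and finish with Chevalley--Warning plus inclusion--exclusion. The sign issue you flag is harmless, since every term is a point count and hence insensitive to the overall sign of the defining polynomial.
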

\begin{proof}
Using contraction-deletion formula (\ref{d8})
\begin{multline}\label{e56}
[\Psi^{1,2}]_q=[\Psi^{13,23}\alpha_3+\Psi^{1,2}_3]_q= q^{N-3}-[\Psi^{13,23}]_q+q[\Psi^{13,23},\Psi^{1,2}_3]_q\\ =q^{N-3}-[\Psi^{13,23}]_q+q^2[\Psi^{134,234}, \Psi^{13,23}_{4}, \Psi^{14,24}_{3},\Psi^{1,2}_{34}]_q\\ +q[\Psi^{134,234}\Psi^{1,2}_{34}-\Psi^{13,23}_{4}\Psi^{14,24}_{3}]_q -q[\Psi^{134,234},\Psi^{14,24}_{3}]_q.
\end{multline}
The most relevant polynomial above is 
\begin{equation}
\Psi^{134,234}\Psi^{1,2}_{34}-\Psi^{13,23}_{4}\Psi^{14,24}_{3}=\Psi^{13,24}\Psi^{14,23},
\end{equation}
here we use the Dodgson identity (\ref{d9}).
The summand $[\Psi^{134,234},\Psi^{14,24}_{3}]_q$ has the form $[f_0,f_3]_q$ for the graph $G\backslash 4$ with a 3-valent vertex, so it supports the $c_2$ invariant.  Similarly to (\ref{e35.1}), $[\Psi^{13,23}]_q$ gives the summand $c_2(G\backslash 3) q$. Putting everything together, one gets 
\begin{multline}
[\Psi^{1,2}]_q\equiv q\big([\Psi^{13,24}]_q+[\Psi^{14,23}]_q-[\Psi^{13,24},\Psi^{14,23}]_q\\ - c_2(G\backslash 3) -c_2(G\backslash 4)\big) \mod q^2.
\end{multline}
The polynomials $\Psi^{13,24}$ and $\Psi^{14,23}$ are both of degree $h_G-2$ and depend on $N_G-4$ varialbes. Since $G$ has at least 4 vertices, $|V_G|-1=N_G-h_G\geq 3$, Chevalley-Warning implies $[\Psi^{13,24}]_q\cong [\Psi^{14,23}]_q\cong 0 \mod q$.
\end{proof}

\begin{proposition}
Let $G$ be a graph with a 4-valent vertex with the 4 incident edges denoted by $e_1,\ldots, e_4$. Then for any prime power $q$,
\begin{equation}\label{e60}
[\Psi_G]_q\equiv q^2\Big([\Psi^{12},\Psi^1_2,\Psi^2_1,\Psi_{12}]_q- \sum _{i=1}^4 c_2(G\backslash i) - [\Psi^{13,24},\Psi^{14,23}]_q\Big) \mod q^3.
\end{equation}
\end{proposition}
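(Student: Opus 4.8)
The plan is to take the two-variable elimination of Lemma~\ref{lem13} (formula~(\ref{e33})) applied to the four incident edges $e_1,\ldots,e_4$ and reduce each of its five summands modulo $q^3$, matching them one-to-one with the three groups of terms on the right-hand side of~(\ref{e60}). I expect the bookkeeping to distribute as follows: the leading power $q^{N_G-1}$ disappears, the term $-[\Psi^1]_q$ produces the $i=1$ copy of $-c_2(G\backslash i)$, the summand $q^2[\Psi^{12},\Psi^1_2,\Psi^2_1,\Psi_{12}]_q$ survives verbatim, the term $q[\Psi^{1,2}]_q$ produces the $i=3,4$ copies together with $-[\Psi^{13,24},\Psi^{14,23}]_q$, and the last term $-q[\Psi^{12},\Psi^2_1]_q$ produces the remaining $i=2$ copy.

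First I would dispose of the routine summands. Because a four-valent vertex requires $N_G\ge 4$, the power $q^{N_G-1}$ is divisible by $q^3$. Since $\Psi^1=\Psi_{G\backslash 1}$ is the graph polynomial of $G\backslash 1$ (a graph on at least three vertices), Lemma~\ref{lem10} gives $q^2\mid[\Psi^1]_q$, and the definition of the $c_2$ invariant upgrades this to $[\Psi^1]_q\equiv c_2(G\backslash 1)\,q^2\bmod q^3$, supplying the $i=1$ term. The summand $q^2[\Psi^{12},\Psi^1_2,\Psi^2_1,\Psi_{12}]_q$ already carries a factor $q^2$, so only its residue modulo $q$ matters and it is copied unchanged. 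Finally, Lemma~\ref{lem15} evaluates $[\Psi^{1,2}]_q$ modulo $q^2$; multiplying by $q$ lifts that congruence to one modulo $q^3$ and yields exactly $q^2\bigl(-[\Psi^{13,24},\Psi^{14,23}]_q-c_2(G\backslash 3)-c_2(G\backslash 4)\bigr)$.

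The only summand demanding fresh work is $-q[\Psi^{12},\Psi^2_1]_q$, which I must show equals $-c_2(G\backslash 2)\,q^2$ modulo $q^3$. Here I would expand the graph polynomial $\Psi^2=\Psi_{G\backslash 2}$ in the variable $\alpha_1$ as $\Psi^2=\Psi^{12}\alpha_1+\Psi^2_1$ and apply~(\ref{lin1}), recovering the identity~(\ref{e35.5}),
\begin{equation*}
[\Psi^2]_q=q^{N_G-2}-[\Psi^{12}]_q+q\,[\Psi^{12},\Psi^2_1]_q .
\end{equation*}
Now $G\backslash 2$ carries a three-valent vertex, so $[\Psi^2]_q\equiv c_2(G\backslash 2)\,q^2\bmod q^3$; whereas $\Psi^{12}=\Psi_{G\backslash\{1,2\}}$ is the graph polynomial of a graph whose former four-valent vertex is now two-valent, so $c_2(G\backslash\{1,2\})=0$ and hence $[\Psi^{12}]_q\equiv 0\bmod q^3$. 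Since $q^{N_G-2}\equiv 0\bmod q^3$ whenever $N_G\ge 5$ (which holds in every case of interest, e.g. a connected four-regular graph has $N_G=2|V_G|\ge 6$), solving for $q[\Psi^{12},\Psi^2_1]_q$ gives $q[\Psi^{12},\Psi^2_1]_q\equiv c_2(G\backslash 2)\,q^2\bmod q^3$, as needed.

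Adding the five reduced summands then leaves precisely $q^2\bigl([\Psi^{12},\Psi^1_2,\Psi^2_1,\Psi_{12}]_q-\sum_{i=1}^{4}c_2(G\backslash i)-[\Psi^{13,24},\Psi^{14,23}]_q\bigr)$ modulo $q^3$, which is~(\ref{e60}). The substantive ingredient is Lemma~\ref{lem15}, whose proof is where the Dodgson identity converting $\Psi^{134,234}\Psi^{1,2}_{34}-\Psi^{13,23}_4\Psi^{14,24}_3$ into $\Psi^{13,24}\Psi^{14,23}$ is spent; granting it, the main points that require care in the present argument are the asymmetric bookkeeping that extracts $c_2(G\backslash 2)$ from $-q[\Psi^{12},\Psi^2_1]_q$ rather than from a visibly symmetric term, and the verification that every discarded piece is genuinely divisible by $q^3$ (in particular the power-of-$q$ count hidden in~(\ref{e35.5})).
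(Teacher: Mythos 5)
Your proposal is correct and follows essentially the same route as the paper: starting from Lemma~\ref{lem13}, feeding in Lemma~\ref{lem15} for $q[\Psi^{1,2}]_q$, reading off $c_2(G\backslash 1)$ from $[\Psi^1]_q$, and extracting $c_2(G\backslash 2)$ from $-q[\Psi^{12},\Psi^2_1]_q$ via~(\ref{e35.5}) together with the vanishing of $c_2$ for the $2$-valent-vertex graph $G\backslash\{1,2\}$. The paper's own proof is just a terser version of exactly this bookkeeping.
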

\begin{proof}
By Lemma \ref{lem13}, the elimination of the first 2 variables gives
\begin{equation}\label{e53}
[\Psi_G]_q=q^{N_G-1}-[\Psi^1]_q+ q^2 [\Psi^{12},\Psi^1_2,\Psi^2_1,\Psi_{12}]_q+q [\Psi^{1,2}]_q- q[\Psi^{12},\Psi^2_1]_q.
\end{equation}
Both the summands $[\Psi^1]$ and $[\Psi^{12},\Psi^2_1]$ give us $c_2$ invariants of the corresponding graphs with 3-valent vertices (use that $G\backslash 12$ has a 2-valent vertex in (\ref{e35.5})). Together with Lemma \ref{lem15}, this implies the statement.
\end{proof}

The first summand on the right hand side of (\ref{e60}), $[\Psi^{12},\Psi^1_2,\Psi^2_1,\Psi_{12}]_q$, is given by the point-counting function of the intersection of 4 hypersurfaces and can a priori be very complicated. Nevertheless, we can prove that it vanishes$\mod q$.

\begin{proposition}\label{Prop_big} Let $G$ be a graph with a 4 valent vertex with incident edges $e_1,\ldots,e_4$ (and with at least 5 vertices). Then, in the notation above, for any $q$ :
\begin{equation}\label{e68}
[\Psi^{12},\Psi^1_2,\Psi^2_1,\Psi_{12}]_q\equiv  0 \mod q.
\end{equation}
\end{proposition}
The prove is moved to the separate section.\medskip

We finally state the main result of the paper, the 4-valent formula for computation of the $c_2$ invariant.
\begin{theorem}\label{Theorem3}
Let $G$ be a graph with $|V_G|\geq 5$  with a 4-valent vertex with incident edges $e_1\ldots,e_4$. Then, for any $q$ :
\begin{equation}\label{b65}
[\Psi_G]_q\equiv -q^2\Big( [\Psi^{13,24},\Psi^{14,23}]_q  + \sum _{i=1}^4 c_2(G\backslash i)\Big) \mod q^3.
\end{equation}
\end{theorem}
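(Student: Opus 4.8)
The plan is to read the statement off directly from the two preceding results, namely the mod-$q^3$ expansion in (\ref{e60}) and the mod-$q$ vanishing of Proposition~\ref{Prop_big}. First I would recall that (\ref{e60}) already presents $[\Psi_G]_q$ as $q^2$ times a three-term bracket,
\[
[\Psi_G]_q\equiv q^2\Big([\Psi^{12},\Psi^1_2,\Psi^2_1,\Psi_{12}]_q- \sum_{i=1}^4 c_2(G\backslash i) - [\Psi^{13,24},\Psi^{14,23}]_q\Big) \mod q^3,
\]
so that only the residues mod $q$ of the three interior summands can influence the right-hand side.

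The single substantive step is then to invoke Proposition~\ref{Prop_big}, which gives $[\Psi^{12},\Psi^1_2,\Psi^2_1,\Psi_{12}]_q\equiv 0 \mod q$. Writing this summand as $q m$ for an integer $m$, its contribution to (\ref{e60}) is $q^2\cdot q m = q^3 m \equiv 0 \mod q^3$; equivalently, inside the bracket read mod $q$ this term simply drops out. What remains is exactly $-q^2\big([\Psi^{13,24},\Psi^{14,23}]_q + \sum_{i=1}^4 c_2(G\backslash i)\big) \mod q^3$, which is (\ref{b65}), so no further computation is needed once the two inputs are in hand.

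The main obstacle is therefore not in this final assembly, which is routine bookkeeping of powers of $q$, but in the two results it rests on. The expansion (\ref{e60}) is itself obtained by eliminating the first two variables through Lemma~\ref{lem13}, recognizing $[\Psi^1]_q$ and $[\Psi^{12},\Psi^2_1]_q$ as $c_2$-supporting terms of subquotient graphs carrying a $3$-valent vertex, and applying Lemma~\ref{lem15} to the $[\Psi^{1,2}]_q$ piece (which in turn uses the first Dodgson identity (\ref{d9}) and the Chevalley--Warning vanishing of $[\Psi^{13,24}]_q$ and $[\Psi^{14,23}]_q$). The genuinely hard part is Proposition~\ref{Prop_big}, the mod-$q$ vanishing of the four-fold intersection point count, whose proof is deferred to a separate section; once that is granted, the theorem follows immediately.
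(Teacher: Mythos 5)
Your proposal is correct and follows exactly the paper's own argument: the theorem is obtained by substituting the mod-$q$ vanishing of $[\Psi^{12},\Psi^1_2,\Psi^2_1,\Psi_{12}]_q$ from Proposition~\ref{Prop_big} into the expansion (\ref{e60}), so that this term contributes a multiple of $q^3$ and drops out. The bookkeeping of powers of $q$ you spell out is precisely what the paper leaves implicit in its one-line proof.
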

\begin{proof}
Since $[\Psi^{12},\Psi^1_2,\Psi^2_1,\Psi_{12}]_q\equiv  0 \mod q$ by the previous theorem, the statement is the immediate consequence of Formula (\ref{e60}).
\end{proof}

\section{Proof of Proposition \ref{Prop_big}}
The idea and the technique of the proof is very similar to Theorem 4.1, \cite{D4}.  We rewrite $[\Psi^{12},\Psi^1_2,\Psi^2_1,\Psi_{12}]_q$ as a sum of functions that of intersections of at most 2 hypersurfaces, as it is usual for the denominator reduction technique, and then we control all the cancellations.  To do this we look closer to $\Psi_G$, use special notation for some of the appearing graph polynomials and other Dodgson polynomials and we will intensively use the Dodgson identities.

It this section we think of $q$ to be fixed and we \textbf{omit} the index $q$ in the point-counting function and will write \textbf{[$\Psi_G$]} and \textbf{[$f,g$]} instead of $[\Psi_G]_q$ and $[f,g]_q$, this will make our formulas more readable.
 
Since deletion of the all of the first 4 edges disconnects $G$, one has $\Psi^{1234}=0$. Similarly to Example \ref{main_ex}, the Jacobi identity gives us the vanishing of the corresponding $4\times 4$ matrix. The first row implies 
\begin{equation}\label{e65}
\Psi^1=\Psi^{1,2}-\Psi^{1,3}+\Psi^{1,4}.
\end{equation}
Expanding the polynomials in $\alpha_2$, $\alpha_3$ and $\alpha_4$, we obtain  $\Psi^{123}_4=\Psi^{123,234}$, $\Psi^{12}_{34}=\Psi^{12,23}_4-\Psi^{12,24}_3$, $\Psi^1_{234}=\Psi^{1,2}_{34}-\Psi^{1,3}_{24}+\Psi^{1,4}_{23}$.
Let's define 
\begin{equation}
a:=\Psi^{ijk}_t, \quad c^{i,j}=(-1)^{i-j-1}\Psi^{i,j}_{kt},\quad b^i_j:=(-1)^{r_b}\Psi^{ki,it}_j,
\end{equation} 
where $\{i,j,k,t\}=\{1,2,3,4\}$, and $r_b=(k-t)$ if $(k-i)(t-i)>0$, and $r_b=(k-t-1)$ otherwise.
Managing other rows of the matrix similar to (\ref{e65}), we derive
\begin{equation}\label{e66}
\begin{aligned}
\Psi^{ijk,ijt}&=a=\Psi^{ijk}_t,\\ 
\Psi^{ij}_{kt}&=b^i_k+b^i_t,\\
\Psi^i_{jkt}&=c^{i,j}+c^{i,k}+c^{i,t}
\end{aligned}
\end{equation}
for all $\{i,j,k,t\}=\{1,2,3,4\}$. The Dodgson identities for the polynomials above imply 
\begin{equation}\label{e67}
(b^i_t)^2\equiv \Psi^{ij}_{kt}\Psi^{ik}_{jt} \mod a \;\;\subset \ZZ [ \alpha ].
\end{equation}
We need to prove the following identity.
\begin{lemma}
Let $G$ be a graph as above. In terms of polynomials defined above,
\begin{equation}\label{e8888}
\Psi^{12,34}=b^2_4-b^1_4=b^2_3 - b^1_3.
\end{equation}
\end{lemma}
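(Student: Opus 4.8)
\emph{Overall strategy.} The identity is an exact identity in $\ZZ[\alpha]$, and the essential difficulty is that $\Psi^{12,34}$ has \emph{disjoint} row/column supports (no edge is shared between $I=\{1,2\}$ and $J=\{3,4\}$), whereas every $b^i_j=\pm\Psi^{ki,it}_j$ shares the index $i$. Because differentiating a Dodgson polynomial in $\alpha_a$ always adds the same index $a$ to both row and column set, $\Psi^{12,34}$ cannot be produced from the single-edge polynomials $\Psi^{i,j}$ by the contraction--deletion formula (\ref{d8}) alone, so the row relations (\ref{e65}) and their expansions will not reach it directly. The plan is therefore to pass to spanning forest polynomials using the interpretation (\ref{d12}) and to rewrite all five objects $\Psi^{12,34},\,b^1_3,\,b^1_4,\,b^2_3,\,b^2_4$ over the single graph $H:=(G\backslash\{1,2,3,4\})\backslash v$, where $v$ is the $4$-valent vertex and $e_i=\{v,w_i\}$; once everything is expressed in the $\Phi^P_H$ the claim becomes an elementary combinatorial identity among spanning forests.

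\emph{Main step.} First I would expand $\Psi^{12,34}$ by (\ref{d12}): since deleting $e_1,\dots,e_4$ isolates $v$, the only partitions of $\{v,w_1,w_2,w_3,w_4\}$ that survive are $\{v\},\{w_1,w_3\},\{w_2,w_4\}$ and $\{v\},\{w_1,w_4\},\{w_2,w_3\}$, entering with opposite signs; discarding the free tree $\{v\}$ gives $\Psi^{12,34}=\pm\big(\Phi^{\{w_1,w_3\},\{w_2,w_4\}}_{H}-\Phi^{\{w_1,w_4\},\{w_2,w_3\}}_{H}\big)$. Next I would reduce each $b^i_4$. Because the relevant superscripts share an edge, the diagonal removal collapses one index, e.g. $\Psi^{12,23}=\pm\Psi^{1,3}_{G\backslash 2}$ and $\Psi^{12,13}=\pm\Psi^{2,3}_{G\backslash 1}$; contracting $e_4$ (the subscript) and using the two-tree description of a single-pair Dodgson at a shared vertex, followed by deletion of the pendant $v$, yields $b^2_4=\pm\Phi^{\{w_4\},\{w_1,w_3\}}_{H}$ and $b^1_4=\pm\Phi^{\{w_4\},\{w_2,w_3\}}_{H}$. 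The crux is then the elementary \emph{free-vertex distribution} identity for forest polynomials, namely $\Phi^{\{w_4\},\{w_1,w_3\}}_{H}=\Phi^{\{w_2,w_4\},\{w_1,w_3\}}_{H}+\Phi^{\{w_4\},\{w_1,w_2,w_3\}}_{H}$ (split the sum according to which tree the unconstrained vertex $w_2$ joins), and similarly for $b^1_4$ with $w_1$ unconstrained. Taking the difference, the two common terms $\Phi^{\{w_4\},\{w_1,w_2,w_3\}}_{H}$ cancel and one is left with exactly $\pm\big(\Phi^{\{w_1,w_3\},\{w_2,w_4\}}_{H}-\Phi^{\{w_1,w_4\},\{w_2,w_3\}}_{H}\big)=\Psi^{12,34}$, which is the first asserted equality.

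\emph{Second equality and obstacle.} The equality $\Psi^{12,34}=b^2_3-b^1_3$ follows by running the identical computation with the roles of $e_3$ and $e_4$ interchanged: one finds $b^2_3=\pm\Phi^{\{w_3\},\{w_1,w_4\}}_{H}$, $b^1_3=\pm\Phi^{\{w_3\},\{w_2,w_4\}}_{H}$, and distributing the free vertices again produces the same signed difference of the two two-tree polynomials; thus $b^2_4-b^1_4$ and $b^2_3-b^1_3$ are literally the same element of $\ZZ[\alpha]$, which also reproves the consistency already implicit in (\ref{e66}). I expect the genuine labor to be entirely in the sign bookkeeping: one must check that the ordered-index signs of the Dodgson polynomials, the signs $\gamma_k$ in the forest expansion (\ref{d12}), and the definition $(-1)^{r_b}$ all conspire to give a single overall sign on each side, so that the two differences align rather than cancel. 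A secondary technical point to handle is the degenerate configurations where the endpoints $w_1,\dots,w_4$ are not distinct or coincide with $v$; there the forest polynomials degenerate, and I would argue that the identity is preserved either by a direct check of the collapsed forest sums or, alternatively, by re-deriving the same relation purely algebraically through Laplace expansion of the minor $\Psi^{12,34}$ along the sparse incidence row of $v$ (which is supported exactly on the columns $e_1,\dots,e_4$), the disjoint-support analogue of the mechanism behind (\ref{d9}).
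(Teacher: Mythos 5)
Your proof is correct and follows essentially the same route as the paper: both expand $\Psi^{12,34}$ and the $b^i_4$ into spanning forest polynomials via (\ref{d10})/(\ref{d12}), identify the two surviving two-tree partitions, and observe that the common term $\Phi^{\{w_4\},\{w_1,w_2,w_3\}}$ cancels in the difference $b^2_4-b^1_4$, leaving exactly the signed pair of forest polynomials that constitute $\Psi^{12,34}$. The only cosmetic differences are that you obtain the second equality by relabelling $e_3\leftrightarrow e_4$ where the paper invokes the linear relation (\ref{e66}), and that you explicitly flag the sign bookkeeping and degenerate multi-edge configurations which the paper leaves implicit.
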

\begin{proof}
We will have the following picture with labelled vertices and edges in mind, see Figure 1. 
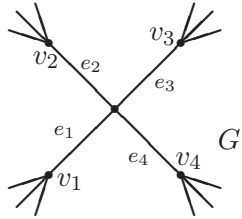
\begin{figure}[h]
\centering
\begin{picture}(100,80)
\thicklines
\put(08,08){\line(1,1){74}}
\put(08,82){\line(1,-1){74}}
\put(20,20){\line(-3,-1){15}}
\put(20,20){\line(-1,-3){5}}
\put(70,70){\line(3,1){15}}
\put(70,70){\line(1,3){5}}
\put(20,70){\line(-1,3){5}}
\put(20,70){\line(-3,1){15}}
\put(70,20){\line(1,-3){5}}
\put(70,20){\line(3,-1){15}}
\put(20,20){\circle*{3}}
\put(70,20){\circle*{3}}
\put(20,70){\circle*{3}}
\put(70,70){\circle*{3}}
\put(45,45){\circle*{3}}
\put(14,62){\text{$v_2$}}\put(58,71){\text{$v_3$}} \put(68,24){\text{$v_4$}}\put(23,16){\text{$v_1$}}

\put(32,60){\text{\scriptsize{$e_2$}}}\put(60,53){\text{\scriptsize{$e_3$}}} \put(50,25){\text{\scriptsize{$e_4$}}}\put(22,35){\text{\scriptsize{$e_1$}}}
\put(84,30){\text{$G$}}
\thinlines 
\end{picture}
\caption{$G$ with a 4-valent vertex}
\end{figure}

Using (\ref{d10}), one writes 
\begin{equation}\label{d40}
\Psi^{12,34}_G= \sum_{U\subset G\backslash I\cup J} \prod_{u\notin U} \alpha_u \det(\cE_G(U\cup \{12\}))\det(\cE_G(U\cup \{34\})),
\end{equation}
where $U$ ranges over all subgraphs of $G\backslash 1234$ which have the property that $U\cup \{12\}$ and $U\cup \{34\}$ are both spanning tries of $G$. One easily sees that the subgraph $U$ should necessary consist of two trees and the vertices $v_1$ and $v_2$ (resp. $v_3$ and $v_4$) are on the different trees. Hence, we have only 2 possibilities and in terms of (\ref{d12}) one obtains
\begin{equation}\label{d43}
\Psi^{12,34}_G=\Phi^{\{13\},\{24\}}_G - \Phi^{\{23\},\{14\}}_G \ .
\end{equation}
A reader can think that $\Phi^{\{13\},\{24\}}_G$ vanishes by the intuition coming from Figure 1, but this is true only form planar graphs.
Analysing similarly $\Psi^{12,13}_4$ and $\Psi^{12,23}_4$, one finds
\begin{equation}\label{d44}
\begin{aligned}
b^1_4=\Phi^{\{23\},\{1,4\}}_{G}+\Phi^{\{123\},\{4\}}_{G}\\
b^2_4=\Phi^{\{13\},\{1,4\}}_{G}+\Phi^{\{123\},\{4\}}_{G}
\end{aligned}
\end{equation}
The first statement follows from (\ref{d43}) and (\ref{d44}). The second statement trivially follows from this using the middle equation of (\ref{e66}).
\end{proof}
The statement holds for all permutation of indexes also modulo the correct signs, but here we only need that form stated above.


We are going to use a formula for elimination of one of the variables from the system of many polynomials linear in that variable, see Proposition 29, \cite{BSY}. Let $f_1,\ldots,f_k\in \ZZ[\alpha_1,\ldots,\alpha_n]$ with $f_i=f_i^1\alpha+f_{i,1}$ for $\alpha=\alpha_1$. Then 
\begin{multline}\label{e70}
[f_1,\ldots,f_n]=[f^\alpha_1,f_{1,\alpha},\ldots,f^\alpha_n,f_{n,\alpha}]q +\\ [[f_1,f_2]_\alpha,\ldots,[f_1,f_n]_\alpha] - [f^\alpha_1,\ldots,f^\alpha_n]\\
\sum^{n-2}_{k=1}([f^\alpha_1,f_{1,\alpha}\ldots,f^\alpha_k,f_{k,\alpha},[f_{k+1},f_{k+2}]_{\alpha},\ldots,[f_{k+1},f_n]_{\alpha}]\\
-[f^\alpha_1,f_{1,\alpha}\ldots,f^\alpha_k,f_{k,\alpha}]).
\end{multline}
Here $[f,g]_{\alpha}=\pm (f^ig_i-f_ig^i)$  is the resultant with respect to $\alpha_i$ for polynomials $f$ and $g$ linear in  $\alpha$. We also write $[f,g]_i$ for $[f,g]_{\alpha_i}$ sometimes.
We apply Formula (\ref{e70}) to the right hand side of (\ref{e68}) for the variable $\alpha=\alpha_3$:
\begin{equation}
f_a=\Psi^{12},\; f_b=\Psi^1_2,\; f_c=\Psi^2_1\;, f_d=\Psi_{12}.
\end{equation}
One obtains
\begin{multline}\label{e73}
[\Psi^{12},\Psi^1_2,\Psi^2_1,\Psi_{12}]=
[f_a,f_b,f_c,f_d]=[f^3_a,f_{a3},f^3_b,f_{b3},f^3_c,f_{c3},f^3_d,f_{d3}]q\\ + \big(A+B+C\big) - \big([f^3_a,f^3_b,f^3_c,f^3_d]+[f^3_a,f_{a3}]+[f^3_a,f_{a3},f^3_b,f_{b3}]\big),
\end{multline}
where
\begin{equation}\label{e74}
\begin{aligned}
A&=\big[[f_a,f_b]_3,[f_a,f_c]_3,[f_a,f_d]_3\big],\\
B&=\big[f^3_a,f_{a3},[f_b,f_c]_3,[f_b,f_d]_3\big],\\
C&=\big[f^3_a,f_{a3},f^3_b,f_{b3},[f_c,f_d]_3\big].
\end{aligned}
\end{equation}
Each of the three summands in the last brackets of (\ref{e73}) is divisible by $q$. Indeed, the variety $\cV(f^3_a,f^3_b,f^3_c,f^3_d)\subset\AAA^{N-2}$ is the cone over the variety defined by the same equations  but in $\AAA^{N-3} (\text{no}\; \alpha_3)$, thus $q|[f^3_a,f^3_b,f^3_c,f^3_d]$. Now $[f_a^3,f_{a3}]=[\Psi^{123},\Psi^{12}_3]=[\Psi^3_{G'},\Psi_{G',3}]$ for $G'=G\q 12$, so $q |[f_a^3,f_{a3}]$ by Lemma \ref{lem10}. For the last summand $[f^3_a,f_{a3},f^3_b,f_{b3}]=[\Psi^{123},\Psi^{12}_3,\Psi^{13}_2,\Psi^1_{23}]$ we are going to use the formula for a 3-valent vertex (\ref{e30})  for the graph $G':=G\q 1$ with edges $e_2,e_3,e_4$ forming the 3-valent vertex. In the notation with $f_i$s but with indices $i=2,3,4$, we have 
\begin{multline}\label{e75}
[\Psi^{23}_{G'},\Psi^{2}_{G',3},\Psi^{3}_{G',2},\Psi_{G',23}]=[f_0,f_0\alpha_3+(f_3+f_4),g_0\alpha_3+(f_2+f_3),\\ (f_2+f_4)\alpha_3+f_{234}]=[f_0,f_3+f_4,f_2+f_3,(f_2+f_4)\alpha_3+f_{234}].
\end{multline}
The connecting identity (\ref{e31}) is of the form $f_0f_{234}=f_2(f_3+f_4)+f_3f_4$, thus again the vanishing of $f_3+f_4$ on $\cV(f_0)$ implies the vanishing of both summands $f_3$ and $f_4$. Analogously, 
\begin{equation}\label{b32}
[f_0,f_2+f_3]=[f_0,f_2,f_3].
\end{equation}
It follows now that all the terms in the brackets 
(\ref{e75}) become independent of $\alpha_3$. As a consequence, it gives us a cone over a variety in $\AAA^{N-3}$, thus the number of points is divisible by $q$.

Summarizing, we derive the following congruence from (\ref{e73}):
\begin{equation}\label{d36}
[\Psi^{12},\Psi^1_2,\Psi^2_1,\Psi_{12}]\equiv\big(A+B+C\big) \mod q
\end{equation} 
with $A,B,C$ given by (\ref{e74}). Now we will work with these 3 summands separately and then will show that they sum up to $0 \mod q$. For simplicity, we list here the involved polynomials:
\begin{equation}\label{e79}
\begin{aligned}
\mathstrut[f_a,f_b]_3&=\Psi^{123}\Psi^1_{23}-\Psi^{12}_3\Psi^{13}_2=(\Psi^{12,13})^2=(a\alpha+b^1_4)^2,\\
[f_a,f_c]_3&=\Psi^{123}\Psi^2_{13}-\Psi^{12}_3\Psi^{23}_1=(\Psi^{12,23})^2=(a\alpha+b^2_4)^2,\\
[f_c,f_d]_3&=\Psi^{23}_1\Psi_{123}-\Psi^2_{13}\Psi^3_{12}=(\Psi^{2,3}_1)^2,\\
[f_b,f_d]_3&=\Psi^{13}_2\Psi_{123}-\Psi^3_{12}\Psi^1_{23}=(\Psi^{1,3}_2)^2,\\
[f_b,f_c]_3&=\Psi^{13}_2\Psi^2_{13}-\Psi^{23}_1\Psi^1_{23},\\
[f_a,f_d]_3&=\Psi^{123}\Psi_{123}-\Psi^{12}_3\Psi^3_{12}.
\end{aligned}
\end{equation}
The coefficient of $\alpha_2$ in the expansion of the first Dodgson identity $\Psi^1_3\Psi_1^3-\Psi^{13}\Psi_{13}=(\Psi^{1,3})^2$ in $\alpha_2$ gives 
\begin{equation}\label{e81}
\Psi^{12}_3\Psi^3_{12}+\Psi^1_{23}\Psi^{23}_1- \Psi^{123}\Psi_{123}-\Psi^{13}_2\Psi^2_{13} = -2\Psi^{12,23}\Psi^{1,3}_2.
\end{equation} 
Similarly, for the expansion in $\alpha_1$ of the Dodgson identity for the pair of edges $e_2$ and $e_3$ implies
\begin{equation}\label{e82}
\Psi^{12}_3\Psi^3_{12}+\Psi^2_{13}\Psi^{13}_2- \Psi^{123}\Psi_{123}-\Psi^{23}_1\Psi^1_{23} = 2\Psi^{12,13}\Psi^{2,3}_1.
\end{equation} 
The sum of the two equalities above reads 
\begin{equation}\label{e83}
\Psi^{12}_3\Psi^3_{12}-\Psi^{123}\Psi_{123}= \Psi^{12,13}\Psi^{2,3}_1-\Psi^{12,23}\Psi^{1,3}_2. 
\end{equation} 
It follows that $[f_a,f_d]_3\in \ZZ[\alpha]$ is in the ideal generated by $\Psi^{12,13}$ and $\Psi^{12,23}$. One computes
\begin{multline}\label{e84}
A=[[f_a,f_b]_3,[f_a,f_c]_3,[f_a,f_d]_3]=[\Psi^{12,13},\Psi^{12,23}]=\\ [a\alpha_4+b^1_4,a\alpha_4+b^2_4]=[a\alpha_4+b^1_4,b^2_4-b^1_4].
\end{multline}
Similarly to the Pl\"uker identity,  Lemma 27 in \cite{Br}, one  obtains: 
\begin{equation}
\det M_G(\{1,2\},\{3,4\})-\det M_G(\{1,2\},\{1,3\})+\det M_G(\{1,2\},\{2,3\})=0.
\end{equation}
Expansion in $\alpha_4$ gives
\begin{equation}\label{e86}
\Psi^{12,34}=b^2_4-b^1_4.
\end{equation}
After the elimination of $\alpha_4$ by (\ref{lin11}),  the equalities (\ref{e84}) and (\ref{e86}) imply
\begin{equation}\label{e87}
A\equiv [\Psi^{12,34}]-[a,\Psi^{12,34}] \mod q.
\end{equation}
Now we are going to compute $B$:
\begin{equation}
B=[f^3_a,f_{a3},[f_b,f_c]_3,[f_b,f_d]_3]=[a,\Psi^{12}_{34},[f_b,f_c]_3,\Psi^{1,3}_2]. 
\end{equation}
We use again the equalities (\ref{e81}) and (\ref{e82}) but now subtracting instead of adding. We immediately get
\begin{equation}
[f_b,f_c]_3=[\Psi^{13}_2\Psi^2_{13}-\Psi^{23}_1\Psi^1_{23}]= \Psi^{12,13}\Psi^{2,3}_1+\Psi^{12,23}\Psi^{1,3}_2.
\end{equation}
It follows that 
\begin{multline}
B=[a,\Psi^{12}_{34},\Psi^{1,3}_2,\Psi^{12,13}\Psi^{2,3}_1]=[a,\Psi^{12}_{34},\Psi^{1,3}_2,(a\alpha_4+b^1_4)\Psi^{2,3}_1]\\
=[a,\Psi^{12}_{34},b^4_2\alpha_4+\Psi^{1,3}_{24},b^1_4\Psi^{2,3}_1].
\end{multline} 
The last term of the last brackets disappears, this follows from (\ref{e67}): $b^1_4$ vanishes on $\cV(a,\Psi^{12}_{34})$.
By (\ref{lin11}), eliminating $\alpha_4$, one now computes 
\begin{equation}
B\equiv [a,\Psi^{12}_{34}] - [a,\Psi^{12}_{34},b^4_2]\mod q.
\end{equation}
The third summand of (\ref{d36}), $C$, takes the form  
\begin{multline}\label{e89}
C=[f^3_a,f_{a3},f^3_b,f_{b3},[f_c,f_d]_3]=[a,\Psi^{12}_{34},\Psi^{13}_{24},\Psi^1_{23},\Psi^{2,3}_1]=\\ [a,\Psi^{12}_{34},\Psi^{13}_{24},\Psi^{14}_{23}\alpha_4+\Psi^1_{234},b^4_1\alpha_4+\Psi^{2,3}_{14}].
\end{multline}
We claim that $\Psi^{14}_{23}$ lies in the ideal generated by $a,\Psi^{12}_{34},\Psi^{13}_{24}$. Indeed, $\Psi^{14}_{23}=b^1_2+b^1_3$ and, by (\ref{e67}), $b^1_2$ vanishes on $\cV(a,\Psi^{13}_{24})$ while $b^1_3$ vanishes on $\cV(a,\Psi^{12}_{34})$. Thus only the last polynomial in (\ref{e89}) depends on $\alpha_4$. One computes 
\begin{equation}
C\equiv [a,\Psi^{12}_{34},\Psi^{13}_{24},\Psi^1_{234}]-[a,\Psi^{12}_{34},\Psi^{13}_{24},\Psi^1_{234},b^4_1]\mod q.
\end{equation}  
Consider the equation similar to (\ref{e83}) but for the collection of edges $(e_1,e_2,e_4)$ instead of $(e_3,e_1,e_2)$: 
\begin{equation}
\Psi^{24}_1\Psi^1_{24}-\Psi^{124}\Psi_{124}= \Psi^{14,24}\Psi^{1,2}_4-\Psi^{12,24}\Psi^{1,4}_2.
\end{equation}
Each of the appearing polynomials depends on $\alpha_3$. A consideration of the constant coefficient gives
\begin{equation}\label{e91}
\Psi^{24}_{13}\Psi^1_{234}-a\Psi_{1234}=b^4_3\Psi^{1,2}_{34}-b^2_3\Psi^{1,4}_{23}.
\end{equation}
Consider the variety $Z=\cV(a,\Psi^{12}_{34},\Psi^{13}_{24})\subset \AAA^{N_G-4}$ and let $Y=Z\backslash Z\cap \cV(b^4_1)$. Since the vanishing of $\Psi^{12}_{34}$ implies $b^1_3=0$ and the vanishing of $\Psi^{13}_{24}$ implies $b^1_2=0$ on $\cV(a)$ by (\ref{e67}), one gets also $\Psi^{14}_{23}=b^1_2+b^1_3=0$ on $\cV(a)$. Hence, again by (\ref{e67}), $b^4_3$ vanishes on $Z$. The equation (\ref{e91}) now implies $\Psi^{24}_{13}\Psi^1_{234}=0$ on $Z$. Since $\Psi^{24}_{13}=b^4_1+b^4_3$, and $b^4_3=0$ while $b^4_1\neq 0$ on $Y$, one derives $Y\cap \cV(\Psi^1_{234})\cong Y$. Thus $C=[\cV(a,\Psi^{12}_{34},\Psi^{13}_{24},\Psi^1_{234})\backslash \cV(a,\Psi^{12}_{34},\Psi^{13}_{24},\Psi^1_{234},b^4_1)]=[Y]$. One computes 
\begin{multline}\label{e92}
B+C\equiv ([a,\Psi^{12}_{34}] + [a,\Psi^{12}_{34},\Psi^{13}_{24}])\\ -( [a,\Psi^{12}_{34},b^4_2]+ [a,\Psi^{12}_{34},\Psi^{13}_{24},b^4_1]) \mod q.
\end{multline}
For $[a,\Psi^{12}_{34},b^4_2]$, one uses the equality  $(b^4_2)^2\equiv \Psi^{14}_{23}\Psi^{34}_{12}\mod a$ in (\ref{e67}) and gets
\begin{multline}\label{e93}
[a,\Psi^{12}_{34},b^4_2]=[a,\Psi^{12}_{34},\Psi^{14}_{23}\Psi^{34}_{12}]=[a,\Psi^{12}_{34},\Psi^{14}_{23}]+[a,\Psi^{12}_{34},\Psi^{34}_{12}]\\
-[a,\Psi^{12}_{34},\Psi^{14}_{23},\Psi^{34}_{12}].
\end{multline}
Similarly, 
\begin{multline}\label{e94}
[a,\Psi^{12}_{34},\Psi^{13}_{24},b^4_1]=[a,\Psi^{12}_{34},\Psi^{13}_{24},\Psi^{24}_{13}\Psi^{34}_{12}]=[a,\Psi^{12}_{34},\Psi^{13}_{24},\Psi^{24}_{13}]+\\
[a,\Psi^{12}_{34},\Psi^{13}_{24},\Psi^{34}_{12}]-[a,\Psi^{12}_{34},\Psi^{13}_{24},\Psi^{24}_{13},\Psi^{34}_{12}].
\end{multline}
The last summands of (\ref{e93}) and (\ref{e94}) coincide. Indeed, $\Psi^{12}_{34}=0=\Psi^{13}_{24}$ on $\cV(a)$ imply $\Psi^{23}_{14}=0$ since $e_1,e_2,e_3$ form a 3-valent vertex in $G\q 4$, and also $\Psi^{23}_{14}=0=\Psi^{34}_{12}$ imply $\Psi^{24}_{13}=0$ for the 3-valent vertex formed by $e_2,e_3,e_4$ in $G\q 1$. One derives
\begin{multline}
[a,\Psi^{12}_{34},\Psi^{13}_{24},\Psi^{24}_{13},\Psi^{34}_{12}]=[a,\Psi^{12}_{34},\Psi^{13}_{24},\Psi^{34}_{12}]=[a,\Psi^{12}_{34},\Psi^{13}_{24},\Psi^{24}_{13}]\\
=[a,\Psi^{12}_{34},\Psi^{14}_{23},\Psi^{34}_{12}].
\end{multline}
Hence, 
\begin{multline}\label{e95}
B+C\equiv [a,\Psi^{12}_{34}] + [a,\Psi^{12}_{34},\Psi^{13}_{24}]-[a,\Psi^{12}_{34},\Psi^{14}_{23}]\\-[a,\Psi^{12}_{34},\Psi^{34}_{12}] \mod q.
\end{multline}
The first summand on the right hand side is divisible by $q$ by Lemma \ref{lem10} applied to $[\Psi^{3}_{G'},\Psi_{G',3}]$ for $G'=G\backslash 4\q \{1,2\}$. Similarly, the second summand on the right hand side of the equality 
\begin{equation}\label{e97}
[a,\Psi^{12}_{34},\Psi^{13}_{24}]=[a,\Psi^{12}_{34}]+[a,\Psi^{13}_{24}]-[a,\Psi^{12}_{34}\Psi^{13}_{24}]
\end{equation}
is divisible by $q$. Using the equality (\ref{e67}), one gets 
\begin{equation}
[a,\Psi^{12}_{34},\Psi^{13}_{24}]\equiv -[a,\Psi^{12}_{34}\Psi^{13}_{24}]\equiv -[a,b^1_4] \mod q.
\end{equation}
The same thing can be done with $[a,\Psi^{12}_{34},\Psi^{14}_{23}]$ in (\ref{e95}). One can also do the step (\ref{e97}) for $[a,\Psi^{12}_{34},\Psi^{34}_{12}]$. The congruence (\ref{e95}) now implies
\begin{equation}
B+C\equiv [a,b^1_3]-[a,b^1_4] + [a,\Psi^{12}_{34}\Psi^{34}_{12}] \mod q. 
\end{equation}
By (\ref{d36}) and (\ref{e87}), we finally get the desired formula
\begin{multline}
[\Psi^{12},\Psi^1_2,\Psi^2_1,\Psi_{12}]\equiv A+B+C\equiv [\Psi^{12,34}]-[a,\Psi^{12,34}]\\ +[a,b^1_3]-[a,b^1_4] + [a,\Psi^{12}_{34}\Psi^{34}_{12}] \mod q.
\end{multline}

\bigskip
One computes $[a,b^1_3]=[\Psi^{23,34}_{G'},\Psi^{2,4}_{G',3}]\equiv c_2(G')\mod q$ for $G'=G\backslash 1$, since the additional (see (\ref{e35.6})) summand $c_2(G'\backslash 3)$ is vanishes $\mod q$ by the existence of a 2-valent vertex. The same holds for $[a,b^1_4]$, so these terms sum up to 0 mod $q$. 
One also has $q|[\Psi^{12,34}]$ by Chevalley-Warning since $|V_G|\geq 5$. 
Hence,
\begin{equation}\label{e102}
[\Psi^{12},\Psi^1_2,\Psi^2_1,\Psi_{12}]\equiv [a,\Psi^{12}_{34}\Psi^{34}_{12}]-[a,\Psi^{12,34}] \mod q.
\end{equation}
Let's consider a modification of our graph shown on the Figure 2.
\begin{figure}[h]
\centering
\begin{picture}(270,80)
\thicklines
\put(08,08){\line(1,1){74}}
\put(08,82){\line(1,-1){74}}
\put(20,20){\line(-3,-1){15}}
\put(20,20){\line(-1,-3){5}}
\put(70,70){\line(3,1){15}}
\put(70,70){\line(1,3){5}}
\put(20,70){\line(-1,3){5}}
\put(20,70){\line(-3,1){15}}
\put(70,20){\line(1,-3){5}}
\put(70,20){\line(3,-1){15}}
\put(20,20){\circle*{3}}
\put(70,20){\circle*{3}}
\put(20,70){\circle*{3}}
\put(70,70){\circle*{3}}
\put(45,45){\circle*{3}}

\put(180,20){\line(-3,-1){15}}
\put(180,20){\line(-1,-3){5}}
\put(230,70){\line(3,1){15}}
\put(230,70){\line(1,3){5}}
\put(180,70){\line(-1,3){5}}
\put(180,70){\line(-3,1){15}}
\put(230,20){\line(1,-3){5}}
\put(230,20){\line(3,-1){15}}
\put(180,20){\line(1,0){50}}
\put(180,70){\line(1,0){50}}
\put(180,20){\line(-1,-1){12}}
\put(180,70){\line(-1,1){12}}
\put(230,20){\line(1,-1){12}}
\put(230,70){\line(1,1){12}}
\put(180,20){\circle*{3}}
\put(180,70){\circle*{3}}
\put(230,20){\circle*{3}}
\put(230,70){\circle*{3}}
\put(27,64){\text{$e_1$}}\put(62,55){\text{$e_2$}} \put(52,24){\text{$e_3$}}\put(18,30){\text{$e_4$}}
\put(84,30){\text{$G$}}
\put(110,40){\vector(1,0){30}}
\put(240,30){\text{$G'$}}
\put(200,25){\text{$e_t$}}\put(205,60){\text{$e_s$}}
\thinlines 
\end{picture}
\caption{From $G$ to $G'$.}
\end{figure}
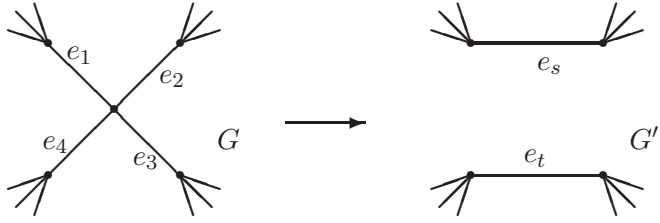
In a graph $G$  with edges $e_1,\ldots,e_4$ forming a 4-valent vertex, we remove this 4 edges (and the vertex) and add two other edges $e_s$ and $e_t$. We denote the resulting graph by $G'$. One easily sees that 
\begin{equation}
\Psi^{12}_{G,34}=\Psi^s_{G',t}\quad \text{and}\quad \Psi^{34}_{G,12}=\Psi^t_{G',s}.
\end{equation}
Using the first Dodgson identity for $I=\{s\}$, $J=\{t\}$, one gets
\begin{equation}\label{b60}
\cV(a,\Psi^{12}_{G,34}\Psi^{34}_{G,12})\cong\cV(\Psi^{st}_{G'},\Psi^s_{G',t}\Psi^t_{G',s})\cong\cV(\Psi^{st}_{G'},\Psi^{s,t}_{G'}).
\end{equation}
Now it remains to show that $\Psi^{s,t}$ and $\Psi^{12,34}$ coincide. 

One can again apply Formula (\ref{d40}) for both  $\Psi^{s,t}$ and $\Psi^{12,34}$. It is easy to see that for each $U$ such that $U\cup \{1,2\}$ (resp. $U\cup \{3,4\}$)  is a spanning tree for $G$, one also has $U\cup \{s\}$ (resp. $U\cup \{t\}$) is a spanning tree for $G'$ and vice-versa. The coefficients in the formula above also coincide, so we get
\begin{equation}
\Psi^{12,34}_G=\Psi^{s,t}_{G'}.
\end{equation}
As a consequence,
\begin{equation}
[a,\Psi^{s,t}_{G'}] = [a,\Psi^{12,34}_{G}] 
\end{equation}
By (\ref{e102}), we finally obtain
\begin{equation}
[\Psi^{12},\Psi^1_2,\Psi^2_1,\Psi_{12}]\equiv 0 \mod q,
\end{equation}
as desired. This finishes the proof of Proposition \ref{Prop_big}.

\section{4-regular graphs}
In this section we look closer to the case of our graph $G$ being 4-regular. In contrary to the situation of a log-divergent graph, we cannot use formula for 3-valent vertex to compute the $c_2$ invariant. The only one possibility that we have now is to use Theorem \ref{Theorem3}. 

\begin{definition}
For a pair of homogeneous polynomials $f,g\in \ZZ[x_1,\ldots,x_n]$ we denote by $\deg_{tot} (f,g)$ and $N(f,g)$ the total degree and the total number of variables they depend on. Also define $\delta(f,g)=\deg_{tot} (f,g)-N(f,g)$. We also use the similar notation for a single homogeneous polynomial $f$ (assume $g=0$). 
\end{definition}

\begin{definition}\label{defSLR}
Define the set of \emph{semilinearly reducible} homogeneous polynomials and (unordered) pairs of homogeneous polynomials $SLR$ inductively:
\begin{itemize}
\setlength\itemsep{0.5em}
\item[$\diamond$] a linear polynomial $f$ with $N(P)=1$ is in $SLR$ and is called elementary. 
\item[$\diamond$] a polynomial  $f$ with $\delta(f)<0$ is in $SLR$ and is called elementary.
\item[$\diamond$] a pair $(f, g)$ of polynomials with $\delta(f,g)<0$ is in $SLR$ and is called elementary.

\item[$\diamond$] a pair $(f, m x_1)$ with $m\in\ZZ$ and $N(f)\geq 1$ is in $SLR$ if  $mf_1$ is in $SLR$.
\item[$\diamond$] a polynomial $f$ with $\delta\geq 0$ is in $SLR$ if there exist a variable $x_1$ 
such that $f$ is linear in $x_1$ and $f^1$ and is in $SLR$. 
\item[$\diamond$] a polynomial $h$ with $\delta\geq 0$ is in $SLR$ if there exist a factorization $h=f^2g$ such that  $\deg_{tot} f\geq 1$  and $fg$ is in $SLR$.
\item[$\diamond$] a pair $(f,g)$ with $\delta\geq 0$ and $N(f),N(g)>1$ is in $SLR$ if there exist a variable $x_1$ such that   both $f$ and $g$ are linear in $x_1$, $(f^1,g^1)\in SLR$ and $f^1g_1-f_1g^1\in SLR$. 
\item[$\diamond$] a polynomial $h$ is in $SLR$ if there exist a factorization $h=fg$ such that  both $f$ and $g$ are in $SLR$, and also $(f,g)\in SLR$.
\end{itemize}
\end{definition}
Since our operation $(f,g)\mapsto (f^1,g^1)$ and $(f,g)\mapsto [f,g]_1$ decrease the total degree by 1, the set $SLR$ is correctly defined by induction. The definition realizes a simple wish to compute the $c_2$ invariant using similar technique to the denominator reduction but for polynomials $\Psi_G$ for $2h_G>N_G$. In order to do that we need is to control all the pieces, not just resultants.  We have an additional structure for each element $D$ in $SLR$, namely the tree $T(D)$ of elements of $SLR$ that give a reduction of $D$ down to the elementary elements (leaves of $T$).
\begin{remark}
The notion of semilinear reducibility is stronger then denominator reducibility, since we should control factorization of more polynomials. On the other side, it is in some sense less strong then the linear reducibility used in the articles discussing evaluation of Feynman integrals by use of iterated integrals. 
\end{remark}
\begin{definition}
A graph $G$ is called \emph{semilinearly reducible} if at least one of the conditions below is satisfied :
\begin{itemize}
\item[$\diamond$] $G$ has a 3-valent vertex (say, incident to edges $e_1,e_2,e_3$) such that the pair of polynomials $(\Psi^{1,2}_{3},\Psi^{13,23})$ is semilinearly reducible, i.e. an element of $SLR$. 
\item[$\diamond$] $G$ has a 4-valent vertex (say, incident to edges $e_1,\ldots,e_4$) such that the pair of polynomials $(\Psi^{13,24},\Psi^{14,23})$, together with 4 pairs $(\Psi^{i,j}_{kt},\Psi^{ik,jk}_t)$ for $(i,j,k,t)$ cyclic permutation of $(1,2,3,4)$ are all semilinearly reducible.
\end{itemize} 
\end{definition}

\begin{proposition}
Let $G$ be a graph with at least one vertex of valency $\leq 4$ and assume $G$ is semilinearly reducible. Then the $c_2$ invariant can be computed in finite number of steps of semilinear reduction by elements in $SLR$:  apart from possible primes of bad reduction
\begin{equation}
\exists\; c\in \ZZ\quad\text{ such that }\quad c_2(G)_q \equiv -c\mod q  \quad \text{for all}\; q.
\end{equation}
Moreover,  $|c|< \frac{1}{2}4^{h_G}$.
\end{proposition}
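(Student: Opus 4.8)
The plan is to induct on the reduction tree $T(\cdot)$ attached to an element of $SLR$, after first reducing $c_2(G)_q$ to a fixed combination of such point counts. First I would dispose of the case where $G$ has a vertex of valency $\le 2$, in which $c_2(G)_q=0$ and one may take $c=0$. If the distinguished vertex is $3$-valent I would invoke Lemma \ref{lemma11} to write $c_2(G)_q\equiv[\Psi^{1,2}_3,\Psi^{13,23}]_q\bmod q$, and if it is $4$-valent I would invoke Theorem \ref{Theorem3}, giving
\[
c_2(G)_q\equiv-[\Psi^{13,24},\Psi^{14,23}]_q-\sum_{i=1}^4 c_2(G\backslash i)\bmod q .
\]
Each $G\backslash i$ has a $3$-valent vertex, so Lemma \ref{lemma11} turns $c_2(G\backslash i)$ into the point count of one of the four pairs $(\Psi^{i,j}_{kt},\Psi^{ik,jk}_t)$. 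Thus in every case $c_2(G)_q$ is, modulo $q$, a fixed $\ZZ$-linear combination of point counts $[D]_q$ of at most five pairs $D$, all of which lie in $SLR$ precisely because $G$ is semilinearly reducible.

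The second step is to show that semilinear reduction evaluates each such $[D]_q$ modulo $q$ as an integer independent of $q$. I would argue by induction on $\deg_{tot}(D)$, following $T(D)$. The elementary leaves are immediate: a pair or polynomial with $\delta<0$ contributes $0$ by Chevalley--Warning vanishing (Theorem \ref{CW_thm}), while a linear polynomial in one variable contributes $1$ at every prime of good reduction. For the inductive step I would feed each clause of Definition \ref{defSLR} through Lemma \ref{lemA17}: reducing modulo $q$ kills the $q\,[\,\cdots]_q$ summands and, since $n\ge 2$ at every interior node, also the $q^{n-1}$ term, leaving only $\pm[\,[f,g]_1\,]_q$ and $\mp[(f^1,g^1)]_q$ for a pair, or $-[f^1]_q$ for a polynomial, each of strictly smaller total degree. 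The two factorization clauses are handled by $[\,f^2g\,]_q=[fg]_q$ and by inclusion--exclusion $[fg]_q=[f]_q+[g]_q-[f,g]_q$, and the clause $(f,mx_1)$ by specializing $x_1=0$. The finitely many primes dividing some leading coefficient or some integer $m$ occurring in a $(f,mx_1)$ step are the primes of bad reduction; away from them the induction produces a single constant $c$ with $c_2(G)_q\equiv-c\bmod q$ for all $q$, and termination is guaranteed because $\deg_{tot}$ drops along every edge of $T(D)$.

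For the quantitative bound I would read off $c$ from the leaves of the five reduction trees: since only the $q$-free terms survive, $c$ is an alternating sum over leaves, each contributing $0$ or $1$, so $|c|$ is at most the total number $L$ of value-one leaves. I would then prove $L(D)\le 2^{\deg_{tot}(D)}$ by induction, using that every step which branches is the splitting of a pair into its resultant and its leading pair, producing exactly two children of strictly smaller total degree, while the linear and perfect-square factorization steps are non-branching. Each of the five top pairs has total degree at most $2h_G-4$ once the optimal Dodgson representatives (which agree modulo $q$) are chosen, whence
\[
|c|\le 5\cdot 2^{2h_G-4}=\tfrac{5}{16}\,4^{h_G}<\tfrac12\,4^{h_G}.
\]

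I expect the bound to be the main obstacle. The delicate point is the general factorization clause $h=fg$ into genuinely distinct factors: the inclusion--exclusion term $[f,g]_q$ is a pair of the \emph{same} total degree as $h$, so a crude count would inflate $L$ beyond $2^{\deg_{tot}}$. One must argue that such factorizations are benign --- for the pairs arising from the Dodgson identities the relevant resultants are perfect squares (as in the proof of Proposition \ref{Prop_big}, where $[f_a,f_b]_3=(\Psi^{12,13})^2$ and similarly for the other resultants), so the factorization clause is used only in its non-branching form $h=f^2g$ and the two-children-per-branch count is preserved. Pinning down this structural fact, together with the degree bookkeeping relating $\deg_{tot}$ of the five pairs to $h_G$, is where the real work lies; the remainder is the routine mod-$q$ bookkeeping of Lemma \ref{lemA17}.
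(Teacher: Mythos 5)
Your proposal follows essentially the same route as the paper: reduce to the $3$-/$4$-valent formulas (Lemma \ref{lemma11} and Theorem \ref{Theorem3}), induct down the reduction tree using the two mod-$q$ identities $[f,g]\equiv[[f,g]_1]-[f^1,g^1]$ and $[fg]\equiv[f]+[g]-[f,g]$, locate the primes of bad reduction in the $mx$ leaves, and bound $|c|$ by $2^{\deg_{tot}}$ per tree, summing to $5\cdot 2^{2h_G-4}<\tfrac12\,4^{h_G}$. The difficulty you flag with the general factorization clause is present in the paper's own (equally terse) treatment of the bound, so your account matches the original in both structure and level of rigor.
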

\begin{proof}
If a graph $G$ has a 2-valent vertex, we know that $c_2(G)=0$.
If a graph $G$ has a 3-valent vertex incident to $e_1,e_2,e_3$, then by Lemma \ref{lemma11} we know that the $c_2$ invariant congruent ot the zeroth coefficient of $[f_0,f_3]$. Similarly, for a 4-valent vertex incident to $e_1,\ldots,e_4$, we use the formula $\ref{b65}$ with 5 summands.

If $2h_G<N_G$, then the term for 3-valent vertex case or each of the 5 terms for 4-valent case is zero$\mod q$ by Chevalley-Warning, since $\deg_{tot}(D)<N(D)$ for the appearing pairs of polynomials. So $c_2(G)=0$. 

If $2h_G\geq N_G$ we iteratively use 
\begin{equation}\label{f10}
\begin{aligned}[]
(a) &\; [ f,g ] \equiv [f^1g_1-f_1g^1] - [f^1,g^1]\mod q\ ,\\
 (b) &\; [fg] \equiv [f] + [g] - [f,g] \mod q\ .
\end{aligned}
\end{equation}

Let $2h_G=N_G$, then the point-counting functions of elements in the form $(f^1,g^1)$ in the linear reduction are zero$\mod q$ for both 3-valent and 4-valent cases, and the resultants $[f,g]_1$ give us the denominator reduction of $G$ in 3-valent case and it's analogue for a 4-valent vertex.
 
 In the case $2h_G>N_G$ we get much more terms then in denominator reduction, for degree $deg_{tot}$ variables we will have in general $4^n$ summands in the reduction. Nevertheless, by the very definition of semilinear reduction, inductively applying steps (\ref{f10}) and decreasing the total degree, we get a final answer.  
 
Assume that on some step we deal with the pair $(f,g)$ with polynomials linear in $\alpha_k$ and the resultant factorizes $[f,g]_k=ab$. Then 
 \begin{equation}
[f,g]=[a]+[b]-[a,b]-[f^k,g^k] \ .
\end{equation}
Since both $a$ and $b$ are of degree much smaller then $deg_{tot}(f,g)-2$, one proves the induction step
\begin{equation}
|[f,g]|\leq 2^{\deg_{tot}(f,g)-1} + 2^{\deg_{tot}(f,g)-2}+|[a]+[b]|\leq 2^{\deg_{tot}(f,g)}
\end{equation}
with the trivial basis of the induction. 
Now, for the 4-valent formula 
\begin{equation}
|c_2(G)|=\leq 2^{2h_G-4} + 4\cdot 2^{2h_G-4}\leq \frac{1}{2} 4^{h_G} \ ,
\end{equation}
and smaller bound for the case of the 3-valent formula.

The only one subtlety here is that the final value $c_2$ is independent of $q$, but only outside of the set of primes of bad reduction. It is possible that in one or several leaves of the tree (T(D)) of the reduction for some element $D\in SLR$ we get $mx$ for some $m\in \ZZ$ with the last variable $x$. This means that the contribution of $[mx]$ to $c_2(G)_q$ is 1 for any $q=p^k$ with $p\not | \;m$ and the contribution is divisible by $p$ when $p|m$, hence $c_2(G)_q$ naturally depends on such situations.
\end{proof}

The most interesting situation is when the degree of appearing polynomials is not too big comparing to the number of variables is the sense $\delta(G)=2h_G-N_G$ is not big. Then it turns out that that $G$ is semilinear reducible for some graphs with small loop number. We ignore the degenerate case when $G$ has a vertex of degree $\leq 2$.

Suppose that $\delta(G)=2$. Then either $G$ has a 3-valent vertex or it is 4-regular graph.  
If $\delta(G)=1$  and $G$ has no vertices of valency more then 4, then it has only 2 3-valent vertices and all the other are 4-valent. Thus the graph is obtained from a 4-regular graph by deletion of 1 of the edges. By formula (\ref{b65}), $c_2$ invariants of these graphs give contribution to $c_2$-invariants  of 4-regular graphs.   

\begin{remark}
There are not very many examples of 4-regular graphs that are semilinear reducible, nevertheless the idea of this reduction can be used for partial reduction: we can get rid of big part of the variables and obtain a polynomial of a much smaller degree depending on less variables, and then apply other techniques or just brute force for that polynomial. 
\end{remark}

The most famous and simple series of graphs in $\phi^4$ theory is $ZZ_h$, and the most simple nontrivial 4-regular graphs are their completions. They should behave nice from the point of view of the $c_2$ invariant. We conjecture the following.
\begin{conjecture}\label{conj25}
Let $ZZ_h$ be the zigzag graph with $h=h_G\geq 3$ loops and let $\widehat{ZZ}_h$ be completion of $ZZ_h$. Then $\widehat{ZZ}_h$ is semilinearly reducible and for any $q$ 
\begin{equation}\label{f33}
c_2(\widehat{ZZ}_n)_q \equiv  - h_G(h_G+2) \mod q 
\end{equation}
(with no primes of bad reduction).
\end{conjecture}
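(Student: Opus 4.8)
The plan is to prove both assertions of the conjecture by induction on $h$, using the $4$-valent formula (Theorem \ref{Theorem3}) as the recursion engine and exploiting the ``chain-like'' self-similarity of the zigzag, which should force the relevant Dodgson polynomials to satisfy a fixed three-term recursion in $h$. First I would fix, for each $h$, an explicit combinatorial model of $\Gamma := \widehat{ZZ}_h$ as a circulant-type $4$-regular graph (so that the base case $\widehat{ZZ}_3$ is $K_5$, with known value $-3\cdot 5 = -15$), together with a canonical edge ordering adapted to the repeating ``rung'' pattern. The purpose of this ordering is to make the first few semilinear reduction steps $h$-independent, leaving a self-similar strip whose Dodgson polynomials $\Psi^{I,J}$ are governed by a transfer matrix. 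Solving that recursion would give a controllable normal form for every polynomial entering the reduction, and, crucially, would let me verify that the factorizations demanded by Definition \ref{defSLR} persist at every step so that $\Gamma$ is semilinearly reducible.

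Next I would apply Theorem \ref{Theorem3} at a single $4$-valent vertex $v$ of $\Gamma$, obtaining
\[ c_2(\Gamma)_q \equiv -[\Psi^{13,24},\Psi^{14,23}]_q - \sum_{i=1}^4 c_2(\Gamma\setminus e_i)_q \mod q. \]
Each deletion $\Gamma\setminus e_i$ has $\delta=1$ and two $3$-valent vertices, so its $c_2$ is computed by Lemma \ref{lemma11} followed by semilinear reduction. By the reflection symmetry of the zigzag about $v$ the four deletions collapse to at most two isomorphism classes, and I would identify these explicitly with graphs assembled from $ZZ_{h-1}$ (equivalently, $\widehat{ZZ}_{h-1}$ with one rung removed), so that their contributions are expressible through the inductive hypothesis. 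The remaining resultant term $[\Psi^{13,24},\Psi^{14,23}]_q$ is a two-hypersurface point count; I would show the pair $(\Psi^{13,24},\Psi^{14,23})$ lies in $SLR$ and evaluate its zeroth $q$-coefficient via the steps (\ref{f10}), again using the transfer-matrix normal form.

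The induction then reduces to showing that the five terms combine to give the increment $c_2(\widehat{ZZ}_h) - c_2(\widehat{ZZ}_{h-1}) = -(2h+1)$, which matches $-h(h+2) + (h-1)(h+1)$. Concretely I would track how each reduction tree $T(D)$ gains exactly one self-similar layer as $h$ increases by one, and show that the net new contribution to the constant term is precisely $-(2h+1)$. The fact that the increment is \emph{linear} in $h$ is encouraging: it suggests that after the $h$-independent initial reductions the surviving locus is governed by a single low-degree equation whose fibrewise point count over the repeating strip is affine in the number of layers, the slope producing the $-2h$ and a boundary term the $-1$. Pinning down this structure, rather than just the numerology, is what would upgrade the computation into a proof.

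The main obstacle I expect is precisely the persistence of semilinear reducibility. For generic $4$-regular graphs the polynomials appearing after a few steps stop factoring --- the paper already notes this for $\widehat{P_{6,2}}$ --- so the entire argument hinges on proving that the zigzag's special structure guarantees the factorizations required by Definition \ref{defSLR} for \emph{all} $h$ uniformly, not merely as coincidences checked for $h\le 8$. Establishing the transfer-matrix/normal-form description rigorously, and proving that each such factorization is a genuine polynomial identity valid for every $h$, is the crux; this is exactly the gap that currently keeps (\ref{f33}) a conjecture rather than a theorem.
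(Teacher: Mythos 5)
The statement you are trying to prove is, in the paper itself, only a conjecture: the author verifies it computationally for $h\le 8$ (Proposition \ref{prop26}) and explicitly leaves open the uniform-in-$h$ proof, remarking that ``there should be an analytic way to prove the semilinear reducibility of $\widehat{ZZ}_h$ for all $h$ using the recurrence relations between Dodgson polynomials.'' Your proposal is, in essence, a more detailed restatement of that same program, and you candidly identify its crux yourself in the last paragraph: nothing in your text actually establishes the transfer-matrix normal form, nor the claim that every factorization demanded by Definition \ref{defSLR} is a polynomial identity valid for all $h$. Since that is precisely the content that would turn (\ref{f33}) from a conjecture into a theorem, what you have written is a research plan, not a proof; every load-bearing step (the $h$-independent initial reductions, the self-similar strip, the affine-in-$h$ fibrewise count producing the increment $-(2h+1)$) is asserted rather than derived.

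There is also a structural gap in the induction itself. Theorem \ref{Theorem3} expresses $c_2(\widehat{ZZ}_h)_q$ in terms of $[\Psi^{13,24},\Psi^{14,23}]_q$ and the four invariants $c_2(\widehat{ZZ}_h\backslash e_i)$, but these deletion graphs have $\delta=1$ and are \emph{not} completions of zigzags, so the inductive hypothesis (which only concerns the single sequence $c_2(\widehat{ZZ}_{h-1})$) says nothing about them. To close the recursion you would need to introduce an auxiliary family of $\delta=1$ graphs, compute or conjecture their $c_2$ as a function of $h$, and prove a coupled system of recurrences; your phrase ``expressible through the inductive hypothesis'' papers over this. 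Your base case $\widehat{ZZ}_3=K_5$ with value $-15$ and the arithmetic of the increment $-h(h+2)+(h-1)(h+1)=-(2h+1)$ are correct, and the overall strategy is plausible and consistent with the paper's own suggestion, but as it stands the proposal does not prove the conjecture.
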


We have checked the conjecture for small loop numbers: 
\begin{proposition}\label{prop26}
The statement of Conjecture \ref{conj25} is true for the 4-regular graphs $\widehat{ZZ_h}$, $h\leq 8$. 
\end{proposition}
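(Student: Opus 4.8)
The plan is to treat Proposition \ref{prop26} as a finite verification, running the semilinear reduction machinery separately for each $h$ with $3\le h\le 8$ and checking that the outcome matches $-h(h+2)$. Since $\widehat{ZZ}_h$ is $4$-regular, every vertex is $4$-valent, so the natural starting point is Theorem \ref{Theorem3}: fixing any vertex $v$ with incident edges $e_1,\ldots,e_4$ one gets
\begin{equation}
c_2(\widehat{ZZ}_h)_q \equiv -\Big([\Psi^{13,24},\Psi^{14,23}]_q + \sum_{i=1}^4 c_2(\widehat{ZZ}_h\backslash i)\Big) \bmod q.
\end{equation}
This splits the task into the resultant-type term $[\Psi^{13,24},\Psi^{14,23}]_q$ and the four edge-deletions $\widehat{ZZ}_h\backslash i$, each of which has $\delta=1$ and carries two $3$-valent vertices.

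Next I would compute each $c_2(\widehat{ZZ}_h\backslash i)$ by the $3$-valent formula of Lemma \ref{lemma11}, $c_2\equiv[f_0,f_3]_q$, and then iterate the semilinear reduction of Definition \ref{defSLR} through the steps (\ref{f10}). Because $\widehat{ZZ}_h$ has a large automorphism group, I expect the four deletions $\widehat{ZZ}_h\backslash e_i$ to fall into very few isomorphism classes; identifying these isomorphisms first would cut the number of genuinely distinct reductions to one or two per $h$. The term $[\Psi^{13,24},\Psi^{14,23}]_q$ would be handled by the same reduction, starting from step $(a)$ of (\ref{f10}) applied to this pair.

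The substance of the proof is verifying that each of these reductions actually goes through, i.e.\ that $\widehat{ZZ}_h$ is semilinearly reducible: at every step one must confirm that the polynomial or resultant produced is linear in some variable with reducible leading coefficient, or splits as $f^2g$, or as $fg$ with the extra pair condition, so that the induction of Definition \ref{defSLR} applies. I would organize this as an explicit reduction tree $T(D)$ for each starting datum $D$, recording at each node the polynomial, the eliminated variable, and the factorization used; the leaves are the elementary (linear, or $\delta<0$) pieces, whose point counts are read off directly, and Chevalley--Warning (Theorem \ref{CW_thm}) kills all $\delta<0$ contributions $\bmod q$. Summing the surviving leaf contributions with the correct signs yields $c_2(\widehat{ZZ}_h)_q$, which I would then check equals $-h(h+2)\bmod q$, simultaneously confirming that no leaf of the form $mx$ with $m\neq\pm1$ occurs, so that there are no primes of bad reduction.

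The main obstacle is precisely this factorization requirement. Semilinear reducibility is not automatic: as $\delta$ grows during the reduction the number of terms proliferates (up to $4^{\deg_{tot}}$ in the worst case), and the reduction can only continue when the polynomials split in the prescribed way. Establishing these factorizations by hand for $h=7,8$ is infeasible, so I expect the verification to be carried out with computer algebra, using the Dodgson identities (\ref{d9}) and the spanning-forest interpretation (\ref{d12}) to keep intermediate polynomials in a manageable combinatorial form and to detect the required factorizations; the human content is organizing the symmetry reductions and confirming that the reduction tree terminates in elementary leaves for every $h\le 8$.
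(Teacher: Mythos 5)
Your proposal matches the paper's approach: the paper offers no written-out proof of Proposition \ref{prop26} beyond the assertion that the conjecture "has been checked" for $h\le 8$, and the intended verification is exactly what you describe --- apply Theorem \ref{Theorem3} at a vertex, reduce the four deleted graphs via the 3-valent formula and the pair $[\Psi^{13,24},\Psi^{14,23}]_q$ via semilinear reduction, and confirm by (computer-assisted) computation that every reduction tree terminates in elementary leaves summing to $-h(h+2)$. Your reading of which pairs must lie in $SLR$ agrees with the paper's definition of semilinear reducibility for a graph with a 4-valent vertex.
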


There should be an analytic way to prove the semilinear reducibility $\widehat{ZZ}_h$ for all $h$ using the recurrence relations between Dodgson polynomials appearing in the reduction processes.


\begin{thebibliography}{99}
\bibitem[1]{BrdKr} Broadhurst,D. Kreimer,D.  \emph{Knots and Numbers in $\phi^4$ Theory to 7 Loops and Beyond}, Int. J. Mod. Phys. C6 (1995), 519-524. 
\bibitem[2]{Br} Brown,F. \emph{On the periods of some Feynman integrals}, arXiv:0910.0114v2

\bibitem[3]{BrSch} Brown,F. Schnetz,O. \emph{A K3 in $\phi^4$}, Duke Math. Journal, Vol. 161, No. 10 (2012), 1817-1862
\bibitem[4]{BSY} Brown,F. Schnetz,O. Yeats,K. \emph{Properties of $c_2$ invariants of Feynman graphs}, Adv. Theor. Math. Phys.18, 2 (2014), 323-362
\bibitem[5]{BrY} Brown,F. Yeats,K. \emph{Spanning forest polynomials and the transcendental weight of Feynman graphs}, Comm. Math. Phys. 301 (2011), 357-382
\bibitem[6]{D2} Doryn,D. \emph{On one example and one counterexample in counting rational points on graph hypersurfaces}, 
Let. Math. Phys., Vol. 97 (2011), Is. 3, 303-315 
\bibitem[7]{D4} Doryn,D. \emph{Dual graph polynomials and a 4-face formula},  arXiv:1508.03484
\bibitem[8]{Sch1} Schnetz,O. \emph{Quantum periods:
a census of $\phi^4$-transcendentals}, Comm. Num.
Th. Phys. 4, no. 1 (2010), 1-48 
\bibitem[9]{Sch2} Schnetz,O.  \emph{Quantum field theory over $F_q$}, The Electronic Jour. of Combin. 18,
\#P102 (2011)
\end{thebibliography}
\end{document}